\documentclass[11pt]{article}
\usepackage{amsfonts,amssymb,amsmath,amsthm}
\usepackage{mathrsfs}
\usepackage{fullpage}
\usepackage{enumitem}
\usepackage{graphicx}
\usepackage{tikz}

\newtheorem{theorem}{Theorem}
\newtheorem{corollary}[theorem]{Corollary}
\newtheorem{lemma}[theorem]{Lemma}
\newtheorem{proposition}[theorem]{Proposition}

\newtheorem{conjecture}[theorem]{Conjecture}


\newtheorem{case}{Case}

\newcommand{\floor}[1]{\left\lfloor#1\right\rfloor}

\newcommand{\st}{\colon\,}

\setcounter{statement}{0}

\DeclareMathOperator{\squareop}{\mathop\square}

\begin{document}

\author{Jordan Almeter\thanks{College of William and Mary}, Samet Demircan\thanks{West Virginia University}, Andrew Kallmeyer\thanks{Miami University}, Kevin G. Milans\thanks{West Virginia University}, Robert Winslow\thanks{University of Kansas}}
\title{The 2-Ranking Numbers of Graphs}
\date{\today}

\newcommand{\rankarg}[1]{\chi_{#1}}
\newcommand{\krank}{\rankarg{k}}
\newcommand{\trank}{\rankarg{2}}
\newcommand{\infrank}{\rankarg{\infty}}
\newcommand{\chis}{\chi_{s}}
\newcommand{\F}{\mathbb{F}}
\newcommand{\dist}{\mathrm{dist}}

\newcommand{\ZZ}{\mathbb{Z}}
\newcommand{\PP}{\mathbb{P}}
\maketitle
\begin{abstract}
In a graph whose vertices are assigned integer ranks, a path is \emph{well-ranked} if the endpoints have distinct ranks or some interior point has a higher rank than the endpoints. A \emph{ranking} is an assignment of ranks such that all nontrivial paths are well-ranked.  A \emph{$k$-ranking} is a relaxation in which all nontrivial paths of length at most $k$ are well-ranked. The \emph{$k$-ranking number} of a graph $G$, denoted by $\krank(G)$, is the minimum $t$ such that there is a $k$-ranking of $G$ using ranks in $\{1,\ldots,t\}$. 

For the $n$-dimensional cube $Q_n$, we prove that $\trank(Q_n) = n+1$.  As a corollary, we improve the bounds on the star chromatic number of products of cycles when each cycle has length divisible by $4$.  We show that $\Omega(n\log m) \le \trank(K_m \squareop K_n) \le O(nm^{\log_2(3)-1})$ when $m\le n$ and obtain $\trank(K_m\squareop K_n)$ asymptotically in $n$ when $m$ is constant.  We prove that $\trank(G) \le 7$ when $G$ is subcubic, and we also prove the existence of a graph $G$ with maximum degree $k$ and $\trank(G) \ge \Omega(k^2/\log(k))$.
\end{abstract}

\section{Introduction}

A path consisting of a single vertex is \emph{trivial}; paths with positive length are \emph{nontrivial}.  In a graph whose vertices are assigned integer ranks, a path is \emph{well-ranked} if its endpoints have distinct ranks or some interior vertex has a higher rank than the endpoints.  A \emph{ranking} of a graph $G$ is an assignment of ranks to $V(G)$ such that every nontrivial path is well-ranked.  Graph rankings have arisen in mathematics and computer science; see the section on rankings in Gallian's dynamic survey~\cite{GallianSurvey} for a summary of results and background.  A \emph{$k$-ranking} is a relaxation under which each nontrivial path of length at most $k$ is well-ranked.  The \emph{$k$-ranking number} of $G$, denoted $\krank(G)$, is the minimum number of ranks in a $k$-ranking of $G$.  The \emph{ranking number} of $G$, denoted $\infrank(G)$, is the minimum $t$ such that $G$ has a ranking using ranks in $\{1,\ldots,t\}$.  We note that a $1$-ranking requires the endpoints of each edge to be assigned distinct colors, and so a $1$-ranking is a proper coloring.  It follows that $\rankarg{1}(G) = \chi(G)$, where $\chi(G)$ denotes the chromatic number of $G$.

The notion of a $2$-ranking was introduced by Karpas, Neiman, and Smorodinsky~\cite{KarpasUSColoring}, who used the term \emph{unique-superior coloring}.  We use the term \emph{$2$-ranking} to emphasize the connection with graph ranking.  In our terminology,  Karpas, Neiman, and Smorodinsky proved that the maximum, over all $n$-vertex trees $T$, of $\trank(T)$ is $\Theta(\frac{\log n}{\log\log n})$.  Trees are $K_3$-minor-free; it turns out that the $k$-ranking number of a graph grows at most logarithmically when some minor is excluded.  Specifically, Karpas, Neiman, and Smorodinsky show that for each graph $H$, there is a constant $s$ such that each $n$-vertex $H$-minor-free graph $G$ satisfies $\krank(G)\le s(k+1)\log n$.  A graph $G$ is \emph{$d$-degenerate} if each subgraph of $G$ has a vertex of degree at most $d$.  They also prove that each $n$-vertex $d$-degenerate graph $G$ satisfies $\trank(G)\le d(4\sqrt{n} + 1)$ and construct $n$-vertex $2$-degenerate graphs $G$ with $\trank(G) > n^{1/3}$.    

Our problems are also motivated by \emph{star colorings}, in which vertices are properly colored and every pair of color classes induces a star forest.  The \emph{star chromatic number} of $G$, denoted $\chis(G)$, is the minimum number of colors in a star coloring of $G$.  In a $2$-ranking of $G$, every pair of ranks induces a star forest in which the centers are all of the higher rank.  It follows that $\chis(G) \le \trank(G)$ for each graph $G$.  

One useful strategy to construct $k$-rankings is to assign distinct ranks to vertices that are close together.  The \emph{$k$-distance power} of $G$, denoted $G^k$, is the graph with vertex set $V(G)$ where $uv\in E(G^k)$ if and only if the distance between $u$ and $v$ in $G$ is at most $k$.  It is clear that $\krank(G) \le \chi(G^k)$.  Summarizing the relationship between our notions of graph colorings, we have
\[ \chi(G) \le \chis(G) \le \trank(G) \le \chi(G^2)\]
for each graph $G$.  In the above chain, it is not possible to bound any of the parameters as a function of its predecessor.  Indeed, the star shows that $\chi(G^2)$ can be arbitrarily large even when $\trank(G)=2$.  Every tree $T$ satisfies $\chis(T)\le 3$, but Karpas, Neiman, and Smorodinsky~\cite{KarpasUSColoring} show that the maximum, over all $n$-vertex trees, of $\trank(T)$ is $\Theta(\frac{\log n}{\log\log n})$.  For each integer $n$, we define $[n]= \{1,\ldots,n\}$.

\section{The hypercube}

The $d$-dimensional cube, denoted $Q_d$, is the graph with vertex set $\{0,1\}^d$ where $u$ and $v$ are adjacent if $u$ and $v$ differ in exactly one coordinate.  We prove that $\trank(Q_d) = d+1$.  The lower bound follows from a useful proposition.  A graph is $k$-degenerate if every subgraph contains a vertex of degree at most $k$.  The \emph{degeneracy} of a graph $G$ is the minimum integer $k$ such that $G$ is $k$-degenerate.

\begin{proposition}\label{prop:lowerbound}
If $G$ is a graph with degeneracy $k$, then $\trank(G) \geq k+1$.
\end{proposition}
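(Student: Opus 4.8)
The plan is to first reduce to graphs of large minimum degree and then exploit a local constraint that $2$-rankings impose on paths of length two. Because $G$ has degeneracy $k$, it is not $(k-1)$-degenerate, so some subgraph $H$ has minimum degree at least $k$. Every path in $H$ is a path in $G$, so the restriction to $V(H)$ of any $2$-ranking of $G$ is a $2$-ranking of $H$; hence $\trank(G) \ge \trank(H)$, and it suffices to prove $\trank(H) \ge k+1$.

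The key observation I would isolate is the following: in any $2$-ranking, if $u$ and $w$ are distinct neighbors of a vertex $v$ and both receive a rank exceeding that of $v$, then $u$ and $w$ receive distinct ranks. Indeed, $u\,v\,w$ is a path of length two and so must be well-ranked; if $u$ and $w$ had equal ranks, well-rankedness would force the interior vertex $v$ to outrank both endpoints, contradicting the assumption that $u$ and $w$ outrank $v$. Thus the neighbors of $v$ that outrank $v$ carry pairwise distinct ranks.

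To finish, I would argue by contradiction. Suppose $H$ admits a $2$-ranking using only ranks in $\{1,\dots,k\}$, and let $v$ be a vertex of $H$ receiving the smallest rank, say $r$. Since a $2$-ranking is in particular a proper coloring, no neighbor of $v$ shares its rank; by minimality of $r$, every neighbor of $v$ therefore receives a rank strictly larger than $r$. By the observation these neighbors have pairwise distinct ranks, all lying in $\{r+1,\dots,k\}$, a set of size $k-r \le k-1$. This bounds the degree of $v$ in $H$ by $k-1$, contradicting the fact that $H$ has minimum degree at least $k$. Hence no such ranking exists and $\trank(H)\ge k+1$, which gives the claim.

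There is no serious obstacle here; the content lies in choosing the right reduction and the right extremal vertex. The only point demanding care is the direction of the inequalities: one must select the vertex of \emph{minimum} rank, so that properness and minimality together force \emph{every} neighbor to outrank it, making the distinctness observation applicable simultaneously to all of its neighbors.
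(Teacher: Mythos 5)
Your proof is correct and follows essentially the same route as the paper's: pass to a subgraph $H$ of minimum degree at least $k$, pick a vertex of minimum rank in $H$, and observe that its neighbors must receive pairwise distinct ranks, all different from its own. You merely spell out the length-two-path argument and the final counting that the paper leaves implicit.
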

\begin{proof}
Since $G$ is not $(k-1)$-degenerate, $G$ contains a subgraph $H$ with minimum degree at least $k$.  Consider a $2$-ranking of $G$, and let $v$ be a vertex of minimum rank in $H$.  The ranks of the neighbors of $v$ in $H$ are distinct, and the rank of $v$ differs from all of these.  It follows that $\trank(G) \ge k+1$.
\end{proof}

Since $Q_d$ is $d$-regular, it follows that $\trank(Q_d) \ge d+1$.  Wan~\cite{wan1997near} proved that $\chi(Q_d^2) = d+1$ when $d=2^k - 1$ for some integer $k$, and it follows that $d+1\le \trank(Q_d) \le \chi(Q_d^2) = d+1$ in this case.  Each color class in a proper coloring of $Q_d^2$ has size at most $\floor{2^d/(d+1)}$, and it follows that $\chi(Q_d^2) \ge 2^d/\floor{2^d/(d+1)}$.  Therefore $\chi(Q_d^2) > d+1$ when $d$ does not have the form $2^k - 1$.  Nonetheless, we show that $\trank(Q_d) = d+1$ for all $d$.  Although determining the exact value of $\chi(Q_d^2)$ remains open, {\"O}sterg{\aa}rd~\cite{distance2cube} proved that $\chi(Q_d^2) = (1+o(1))d$.  

We view the vertex set of $Q_d$ as $\F_2^d$, the $d$-dimensional vector space over the finite field $\F_2$ with $2$ elements.  For $u\in \F_2^d$, we define the \emph{support} of $u$ to be the set of coordinates in $[d]$ where $u$ has value $1$.  The \emph{weight} of $u$, denoted $w(u)$, is the size of the support of $u$.  Note that for all vertices $u,v\in \F_2^n$, we have that $\dist(u,v)= w(u-v)$, where $\dist(u,v)$ is the length of a shortest path from $u$ to $v$ in $Q_d$.  For integers $i$ and $j$, we use $[i,j]$ to denote the interval $\{i,i+1,\ldots,j\}$.

\begin{theorem}\label{thm:trank-cube}
$\trank(Q_d) = d+1$.
\end{theorem}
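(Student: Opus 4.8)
The lower bound is already in hand: $Q_d$ is $d$-regular, so Proposition~\ref{prop:lowerbound} gives $\trank(Q_d)\ge d+1$. All the work is in the upper bound, namely exhibiting a $2$-ranking $r\colon \F_2^d\to[d+1]$. My first move would be to unpack exactly which short paths can fail. A path of length $1$ only demands that $r$ be a proper coloring, which is cheap. For a path $u-w-v$ of length $2$, the endpoints lie at distance $2$, so $u$ and $v$ differ in exactly two coordinates $i,j$ and have precisely two common neighbors, $u+e_i$ and $u+e_j$; the path is well-ranked unless $r(u)=r(v)$, in which case we need $r(w)>r(u)$ for whichever of the two neighbors $w$ is. I would split these distance-$2$ pairs into two regimes. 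In the \emph{aligned} case $u,v$ change the two coordinates in the same direction, so $w(u)$ and $w(v)$ differ by $2$ and both common neighbors have the intermediate weight $\min(w(u),w(v))+1$. In the \emph{transposition} case $u_i\neq v_i$ and $u_j\neq v_j$ point oppositely, so $w(u)=w(v)$, and one common neighbor has weight $w(u)+1$ while the other has weight $w(u)-1$.

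The crux of the plan is the observation that these two regimes must be treated oppositely, and that this rules out every easy construction. Any rank that is monotone in weight (in particular $r=w+1$ itself) is disastrous on transposition pairs: the two endpoints share a rank while one common neighbor sits a weight \emph{below} them, so it can never be forced higher. The same failure dooms additive and product-type ranks: on $Q_a \squareop Q_b$ a rule like $r(x,y)=r_a(x)+r_b(y)$ collapses on a mixed distance-$2$ pair, since the two common neighbors move in opposite directions along the two factors and cannot both be raised. Thus the correct target is to force every \emph{transposition} pair to receive distinct ranks, and to permit a repeated rank only on an \emph{aligned} pair, where both common neighbors share the single intermediate weight and can uniformly be pushed up. This split is genuinely necessary: since $\chi(Q_d^2)$ exceeds $d+1$ whenever $d$ is not of the form $2^k-1$, we cannot give all distance-$2$ pairs distinct ranks, so some repetition is unavoidable and must be steered entirely onto aligned pairs that are guarded by higher intermediate vertices.

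To build such an $r$ I would induct along the coordinates, writing $Q_d=B\cup T$ with $B=\{u\st u_d=0\}\cong Q_{d-1}$ and $T=\{u\st u_d=1\}\cong Q_{d-1}$, rank $B$ by an inductively given $2$-ranking $\rho$ of $Q_{d-1}$ using ranks in $[d]$, and rank $T$ by a modified copy that spends the new top rank $d+1$. The new distance-$2$ pairs created are exactly the mixed pairs $(u,0),(v,1)$ with $u$ adjacent to $v$ in $Q_{d-1}$, together with the transposition and aligned pairs living inside $B$ or inside $T$; the first two families are the ones I expect to fight. The main obstacle, and where the interval structure on the coordinates should earn its keep, is choosing the ranks on $T$ so that no mixed pair becomes monochromatic with a low common neighbor, equivalently so that $\rho(u)\neq r_T(v)$ for every edge $uv$ of $Q_{d-1}$ while $r_T$ remains a valid $2$-ranking and disagrees with $\rho$ on each matched pair $(u,0),(u,1)$. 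Carrying out the induction therefore requires strengthening the hypothesis to record, for each vertex, the set of ranks appearing in its neighborhood, so that the extra rank $d+1$ can be placed to dodge these conflicts; verifying that the aligned pairs remain guarded after this placement is the delicate point I would expect to consume most of the argument.
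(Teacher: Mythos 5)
Your lower bound and your unpacking of what a length-$2$ path demands are fine, but the proposal stops exactly where the theorem starts: no $2$-ranking of $Q_d$ is ever produced. The coordinate-by-coordinate induction (split $Q_d$ into $B\cup T$, rank $B$ by an inductive $\rho$, rank $T$ by some $r_T$ spending the new rank $d+1$) is stated only as a goal: the strengthened induction hypothesis is never formulated, the choice of $r_T$ is never made, and you yourself flag the verification as ``the delicate point I would expect to consume most of the argument.'' That unperformed step is the entire content of the upper bound, so this is a research plan, not a proof. It is also far from clear the plan can succeed: already in passing from $Q_2$ to $Q_3$, the constraints you list ($r_T$ a $2$-ranking of $Q_{d-1}$ in $[d+1]$, $r_T(u)\neq\rho(u)$ for every $u$, and every mixed pair with $\rho(u)=r_T(v)$ for $uv\in E(Q_{d-1})$ guarded by a higher common neighbor) interact badly, and natural candidates for $r_T$ fail; nothing in the proposal indicates what extra structure the induction should carry to escape this.

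The guiding heuristic is also wrong, and it points away from the construction that works. You claim it is ``genuinely necessary'' to give distinct ranks to every transposition pair (equal-weight distance-$2$ pair) and to confine repeated ranks to aligned pairs. The paper's construction violates this. It writes $d=t+2^k$ with $0\le t\le 2^k-1$ (so the induction roughly halves the dimension rather than decrementing it), and decides high versus low by the \emph{parity} of $w(u^+)$, the weight of the last $2^k$ coordinates: even-parity vertices inherit a low rank from a $2$-ranking of $Q_t$ applied to the first $t$ coordinates, while odd-parity vertices get a high rank $\phi(Au)$, where $A$ is a $k\times d$ matrix over $\F_2$ whose first $t$ columns are distinct and nonzero and whose last $2^k$ columns enumerate $\F_2^k$ (a Hamming-code device). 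Under this ranking, many transposition pairs inside the last block of coordinates receive \emph{equal} low ranks; they are harmless because both of their common neighbors have odd parity and hence high rank. Your aligned/transposition dichotomy is an artifact of assuming the ranking is monotone in weight: once high versus low is governed by parity rather than weight, both common neighbors of an even-even pair are high whether the pair is aligned or transposed. So the structural principle around which you organize the whole attempt is false, and following it would steer you away from the parity-plus-linear-algebra construction that actually proves $\trank(Q_d)\le d+1$.
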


\begin{proof}
As we have seen, $\trank(Q_d) \ge d + 1$.  We prove the upper bound by induction on $d$.  The result for $d\in \{0,1\}$ is trivial, since the vertices may be assigned distinct ranks in the interval $[0, d]$.  Suppose that $d\ge 2$, and express $d$ as $t+2^k$ where $k \ge 1$ and $0\le t \le 2^k - 1$.  Given $u\in \F_2^d$, we let $u^-$ be the vector in $\F_2^t$ consisting of the first $t$ coordinates of $u$ and we let $u^+$ be the $2^k$-dimensional vector consisting of the remaining coordinates.  If $w(u^+)$ is even, then we set the rank of $u$ equal to the rank in $[0,t]$ assigned to $u^-$ inductively.  

If $w(u^+)$ is odd, then we assign $u$ a rank in the interval $[t+1, d]$ as follows.  Let $A$ be a $(k\times d)$-matrix whose first $t$ columns are distinct, nonzero vectors in $\F_2^k$ and whose last $2^k$ columns form a permutation of $\F_2^k$.  Let $\phi\st \F_2^k \to [t+1, d]$ be a bijection.  When $w(u^+)$ is odd, we set the rank of $u$ to be $\phi(Au)$.  Ranks in the range $[0,t]$ are \emph{low}, and ranks in the range $[t+1, d]$ are \emph{high}.  

We show that this assignment is a $2$-ranking.  Let $P$ be a $uv$-path of length $1$ or $2$.  Note that $w(u-v)$ equals the length of $P$.  We prove that $P$ is well-ranked by examining several cases.

\begin{case}
The support of $u-v$ is contained in the first $t$ coordinates.
\end{case}
We have that $u^+ = v^+$.  If $w(u^+)$ and $w(v^+)$ are even, then the vertices of $P$ are colored inductively and so $P$ is well-ranked by induction.  Otherwise both $w(u^+)$ and $w(v^+)$ are odd, and so $u$ is assigned rank $\phi(Au)$ and $v$ is assigned rank $\phi(Av)$.  Since $w(u-v)\in \{1,2\}$, it follows that $A(u-v)$ is the sum of one or two of the first $t$ columns of $A$.  Since these columns are nonzero and distinct, we have that $A(u-v) \ne 0$ and it follows that $u$ and $v$ are assigned different ranks.  Therefore $P$ is well-ranked.

\begin{case}
The support of $u-v$ is contained in the last $2^k$ coordinates and $\dist(u,v) = 1$.
\end{case}
Since $w(u-v) = \dist(u,v) = 1$, it follows that $w(u^+)$ and $w(v^+)$ have opposite parity, implying that one of $\{u,v\}$ is assigned a high rank and the other is assigned a low rank.  

\begin{case}
The support of $u-v$ is contained in the last $2^k$ coordinates and $\dist(u,v) = 2$.
\end{case}
We have that $w(u^+)$ and $w(v^+)$ have the same parity.  Let $x$ be the internal vertex on $P$, and note that $w(x^+)$ has opposite parity.  If both $w(u^+)$ and $w(v^+)$ are even and $w(x^+)$ is odd, then the endpoints $u$ and $v$ are assigned low rank while $x$ is assigned high rank, and so $P$ is well-ranked.  If both $w(u^+)$ and $w(v^+)$ are odd, then $u$ has rank $\phi(Au)$ and $v$ has rank $\phi(Av)$.  Since $w(u-v) = \dist(u,v) = 2$ and the support of $u-v$ is contained in the last $2^k$ coordinates, it follows that $A(u-v)$ is the sum of two columns from the last $2^k$ columns in $A$.  Since these are distinct, it follows that $A(u-v) \ne 0$.  Therefore $Au\ne Av$, and so $P$ is well-ranked.

\begin{case}
The support of $u-v$ intersects both the first $t$ coordinates and the last $2^k$ coordinates.
\end{case}
We have that $w(u^+)$ and $w(v^+)$ have opposite parity.  Therefore one of $\{u,v\}$ has high rank and the other has low rank.

In all cases, $P$ is well-ranked.  
\end{proof}

The $2$-ranking given in Theorem~\ref{thm:trank-cube} assigns the same low rank to $u$ and $v$ whenever $u^- = v^-$ and both $w(u^+)$ and $w(v^+)$ are even.  Consequently, when $d\ge 3$, many pairs of vertices at distance $2$ share a common rank.  When $d$ is one less than a power of two, a proper coloring of $Q_d^2$ is a $2$-ranking of $Q_d$ in which pairs of vertices at distance $2$ receive distinct ranks.  It follows that when $d\ge 3$ and $d$ has the form $2^k-1$, there are non-isomorphic optimal $2$-rankings of $Q_d$.  The situation when $d$ has the form $2^k$ may be different.  For $d\in\{1,2\}$, there is only one optimal $2$-ranking of $Q_d$ up to isomorphism.  We suspect that $Q_4$ has only one optimal $2$-ranking up to isomorphism.  Is it true that $Q_d$ has one optimal $2$-ranking up to isomorphism when $d$ is a power of two?

The \emph{cartesian product} of $G$ and $H$, denoted $G\squareop H$, is the graph with vertex set $V(G)\times V(H)$ where $(u,v)$ is adjacent to $(u',v')$ if and only if $u=u'$ and $vv'\in E(H)$ or $uu'\in E(G)$ and $v=v'$.

\begin{corollary}\label{cor:prodcycles}
If $G$ is the cartesian product of $d$ cycles, each of which has length divisible by $4$, then $\chis(G) \le \trank(G) = 2d + 1$.
\end{corollary}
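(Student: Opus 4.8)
The plan is to handle the two bounds separately, observing first that the inequality $\chis(G)\le\trank(G)$ is immediate from the general chain $\chi(G)\le\chis(G)\le\trank(G)\le\chi(G^2)$ established in the introduction, so only the equality $\trank(G)=2d+1$ requires work. For the lower bound, note that each cycle is $2$-regular, so the cartesian product $G$ of $d$ cycles is $2d$-regular; in particular $G$ itself is a subgraph with minimum degree $2d$, so $G$ has degeneracy $2d$. Proposition~\ref{prop:lowerbound} then gives $\trank(G)\ge 2d+1$.

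For the upper bound, the key idea is to exhibit a covering map from $G$ onto the cube $Q_{2d}$ and pull back an optimal $2$-ranking. First I would record the pullback lemma: if $\pi\colon G\to H$ is a graph homomorphism that is locally bijective (a covering map) and $c$ is a $2$-ranking of $H$, then $c\circ\pi$ is a $2$-ranking of $G$ using the same ranks. The verification is short. An edge $v_0v_1$ of $G$ maps to an edge of $H$, so $c(\pi(v_0))\ne c(\pi(v_1))$. For a path $v_0v_1v_2$ of length $2$, local injectivity at $v_1$ forces $\pi(v_0)\ne\pi(v_2)$, so $\pi(v_0)\pi(v_1)\pi(v_2)$ is a genuine path of length $2$ in $H$; since $c$ is a $2$-ranking of $H$, either $c(\pi(v_0))\ne c(\pi(v_2))$ or $c(\pi(v_1))$ exceeds both, and in either case the lifted path is well-ranked.

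Next I would build the covering map. Writing each cycle length as $n_i=4m_i$, the map $\ZZ_{4m_i}\to\ZZ_4$ given by $x\mapsto x\bmod 4$ is a covering map $C_{n_i}\to C_4$: it sends edges to edges, and at each vertex its two neighbors $x\pm1$ map to the two distinct neighbors of $x\bmod 4$ in $C_4$, so it is locally bijective. Here divisibility by $4$ is exactly what makes the wrap-around edge behave correctly; for $n_i\equiv 2\pmod 4$ the map fails to be locally bijective. Since a cartesian product of covering maps is again a covering map (local bijectivity in each coordinate direction assembles into local bijectivity of the product, the directions not colliding because each $\pi_i$ sends an edge to an edge), the product $\pi\colon G\to C_4^{\squareop d}$ is a covering map. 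Finally $C_4=Q_2$ and $Q_a\squareop Q_b=Q_{a+b}$ give $C_4^{\squareop d}=Q_{2d}$, and Theorem~\ref{thm:trank-cube} supplies a $2$-ranking of $Q_{2d}$ with $2d+1$ ranks. Pulling it back along $\pi$ yields a $2$-ranking of $G$ with $2d+1$ ranks, so $\trank(G)\le 2d+1$.

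I expect the main obstacle to be a careful treatment of the covering-map machinery: stating the pullback lemma correctly (the only real subtlety is guaranteeing $\pi(v_0)\ne\pi(v_2)$ for length-$2$ paths, which is precisely where local injectivity enters) and verifying that cartesian products of covering maps are covering maps. Once these are in place, the explicit construction of $\pi$ and the appeal to Theorem~\ref{thm:trank-cube} are routine.
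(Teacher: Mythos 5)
Your proposal is correct and is essentially the paper's own argument: the paper likewise gets the lower bound from Proposition~\ref{prop:lowerbound} via $2d$-degeneracy, and for the upper bound uses exactly your map (coordinatewise reduction modulo $4$, viewing $V(Q_{2d})$ as $\ZZ_4^d$) to pull back the $2$-ranking from Theorem~\ref{thm:trank-cube}. The only difference is presentational: you package the verification as a general covering-map pullback lemma, whereas the paper verifies directly that short paths in $G$ map to paths of the same length in $Q_{2d}$.
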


\begin{proof}
Since $G$ has degeneracy $2d$, Proposition~\ref{prop:lowerbound} implies that $\trank(G)\ge 2d+1$.  Note that $Q_{2d}$ is the cartesian product of $d$ copies of $C_4$.  Viewing $V(Q_{2d})$ as $\ZZ_4^d$, let $f\st \ZZ_4^d\to [2d+1]$ be a $2$-ranking of $Q_{2d}$.  We use $f$ to color $G$.  Let $m_1, \ldots, m_d$ be the cycle lengths of the factors of $G$, and view $V(G)$ as $\{(x_1, \ldots, x_d)\st x_i \in \ZZ_{m_i}\}$.  For $x\in V(G)$, let $x'$ be the vertex in $Q_{2d}$ obtained from $x$ by reducing each coordinate of $x$ modulo $4$.  We assign $x\in V(G)$ the rank $f(x')$.  Since each path $P$ in $G$ of length at most $3$ maps to a path in $Q_{2d}$ of the same length whose vertices are assigned the same ranks as in $G$, it follows that $G$ inherits the $2$-ranking of $Q_{2d}$.
\end{proof}

Let $G$ be the cartesian product of $d$ cycles.  Fertin, Raspaud, and Reed~\cite{fertinStar} proved that $d+2 \le \chis(G) \le 2d^2+d+1$, and improved the upper bound to $2d+1$ in the case that $2d+1$ divides the length of each factor cycle.  P\'or and Wood~\cite{PorWood} proved that $G$ admits a proper $(6d+O(\log d))$-coloring in which each pair of color classes induces a matching and isolated vertices; their result directly implies that $\chis(G) \le 6d + O(\log d)$.  Corollary~\ref{cor:prodcycles} extends the divisibility conditions under which it is known that $\chis(G) \le 2d+1$.

\section{Cartesian products of complete graphs}

Determining the $2$-ranking number of $K_m \squareop K_n$ is an interesting problem.  For each fixed $m$, we obtain $\trank(K_m \squareop
K_n)$ asymptotically.  When $m=n$, our bounds are far apart.  A $2$-ranking of $K_m \squareop K_n$ can be viewed as an $(m\times n)$-matrix $A$ such that $A(i,j)$ is the rank of $(u_i, v_j) \in V(K_m \squareop K_n)$.  The condition that paths of length $1$ are well-ranked is equivalent to the rows and columns of $A$ having distinct entries.  The condition that paths of length $2$ are well-ranked is equivalent to the property that $A(i,j) = A(i',j')$ implies that the opposite corners $A(i,j')$ and $A(i',j)$ are larger than $A(i,j)$ and $A(i',j')$.

For positive integers $a,b,c,d$, our first result obtains a $2$-ranking of $K_{ac}\squareop K_{bd}$ from $2$-rankings of $K_a\squareop K_b$ and $K_c\squareop K_d$.

\begin{proposition}\label{prop:cart-comp}
$\trank(K_{ac}\squareop K_{bd})\le \trank(K_a\squareop K_b)\cdot \trank(K_c\squareop K_d)$.
\end{proposition}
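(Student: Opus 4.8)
The plan is to combine optimal $2$-rankings of the two factors using a mixed-radix (lexicographic) encoding, relying on the matrix characterization of $2$-rankings of $K_m\squareop K_n$ recalled above. Write $p = \trank(K_a \squareop K_b)$ and $q = \trank(K_c \squareop K_d)$, and fix $2$-rankings realized as matrices $B \in [p]^{a \times b}$ and $C \in [q]^{c \times d}$. I index the rows of the target $K_{ac}\squareop K_{bd}$ by pairs $(i_1,i_2)\in[a]\times[c]$ and its columns by pairs $(j_1,j_2)\in[b]\times[d]$, and define the rank matrix $A$ by
\[ A\big((i_1,i_2),(j_1,j_2)\big) = \big(B(i_1,j_1)-1\big)q + C(i_2,j_2). \]
This takes values in $[pq]$, so it suffices to show that $A$ satisfies the row/column distinctness condition and the opposite-corner condition.

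For distinctness, division by $q$ recovers $B(i_1,j_1)$ from the quotient and $C(i_2,j_2)$ from the remainder, so two $A$-entries agree exactly when the corresponding $B$-entries agree and the corresponding $C$-entries agree. Within a fixed row $(i_1,i_2)$, an equality of entries would force $B(i_1,j_1)=B(i_1,j_1')$ and $C(i_2,j_2)=C(i_2,j_2')$; since rows of $B$ and of $C$ have distinct entries, this gives $j_1=j_1'$ and $j_2=j_2'$. Columns are handled symmetrically.

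For the corner condition, suppose $A$ takes a common value at positions $((i_1,i_2),(j_1,j_2))$ and $((i_1',i_2'),(j_1',j_2'))$; by distinctness these lie in different rows and columns, and the equality forces $B(i_1,j_1)=B(i_1',j_1')$ and $C(i_2,j_2)=C(i_2',j_2')$. I split into cases according to whether these $B$-positions coincide. If the $B$-positions are distinct, then, being positions of equal $B$-value, they lie in distinct rows and columns of $B$, and $B$'s corner condition gives $B(i_1,j_1') > B(i_1,j_1)$ and $B(i_1',j_1) > B(i_1,j_1)$; since each target corner then has high-order digit strictly exceeding $B(i_1,j_1)$ while $1\le C\le q$, each corner is strictly larger than the common value $\le B(i_1,j_1)\,q$, regardless of its $C$-part. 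If instead the $B$-positions coincide, then distinctness of the original rows and columns forces $i_2\ne i_2'$ and $j_2\ne j_2'$, so the $C$-positions are distinct (in different rows and columns of $C$); now both corners share the high-order digit $B(i_1,j_1)$, and $C$'s corner condition makes their low-order digits strictly larger, so again both corners exceed the common value.

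The main obstacle is purely bookkeeping: keeping the two layers of indices aligned and verifying that ``distinct rows and columns in the target'' translates into the correct factor conditions in each case. The conceptual point that makes everything go through is the choice of radix: placing $B$ in the high-order position guarantees that any discrepancy in the primary factor dominates the contribution of the secondary factor $C$, so the two factors never interfere.
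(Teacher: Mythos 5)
Your proof is correct and follows essentially the same route as the paper: the paper's construction replaces each entry $A(i,j)$ of the first ranking by the block $\ell A(i,j) + B$, which is exactly your mixed-radix encoding $\bigl(B(i_1,j_1)-1\bigr)q + C(i_2,j_2)$ up to a shift between $0$-indexed and $1$-indexed ranks. The only difference is that the paper dismisses the verification as ``easy to see,'' whereas you carry out the row/column distinctness and corner-condition case analysis in full, which is a faithful expansion of the same argument.
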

\begin{proof}
Let $k = \trank(K_a\squareop K_b)$ and $\ell = \trank(K_c\squareop K_d)$.  Let $A$ be an $(a\times b)$-matrix with entries in $\{0, \ldots, k-1\}$ encoding an optimal $2$-ranking of $K_a\squareop K_b$, and let $B$ be an $(c\times d)$-matrix with entries in $\{0, \ldots, \ell-1\}$ encoding an optimal $2$-ranking of $K_c\squareop K_d$.  We use block operations to construct a $2$-ranking of $K_{ac}\squareop K_{bd}$.  

Let $C$ be the $(ac\times bd)$-matrix obtained from $A$ and $B$ by replacing each entry $A(i,j)$ in $A$ with the $(c\times d)$-matrix $\ell A(i,j) + B$.  It is easy to see that $C$ encodes a $2$-ranking of $K_{ac}\squareop K_{bd}$.  Since the entries in $C$ belong to $\{0,\ldots, k\ell - 1\}$, we have that $\trank(K_{ac}\squareop K_{bd}) \le k\ell$.
\end{proof}

Proposition~\ref{prop:cart-comp} may be iterated to obtain upper bounds on $\trank(K_n \squareop K_n)$.

\begin{corollary}\label{cor:square-upper}
If $m$ and $n$ are powers of 2 with $m\le n$, then $\trank(K_m\squareop K_n)\le nm^{\log_2(3) - 1} \approx nm^{0.585}$.
\end{corollary}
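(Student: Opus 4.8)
The plan is to iterate Proposition~\ref{prop:cart-comp}. Write $m = 2^p$ and $n = 2^q$ with $0 \le p \le q$. The first step is to recast the target bound in a cleaner multiplicative form: since
\[ n\, m^{\log_2(3) - 1} = 2^q \cdot 2^{p(\log_2(3) - 1)} = 2^{q-p} \cdot 2^{p\log_2 3} = 2^{q-p}\cdot 3^p, \]
it suffices to prove that $\trank(K_{2^p}\squareop K_{2^q}) \le 2^{q-p}\,3^p$ whenever $p \le q$. The intuition is that each application of Proposition~\ref{prop:cart-comp} should strip one factor of $2$ from each side using a $K_2\squareop K_2$ block (contributing a factor of $3$), and the base case should absorb the surplus dimensions of $n$ as a complete graph.

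I would prove this by induction on $p$. For the base case $p = 0$, the graph $K_{2^0}\squareop K_{2^q} = K_1 \squareop K_n = K_n$ is complete, so any $2$-ranking assigns distinct ranks and $\trank(K_n) = n = 2^q = 2^{q-0}\,3^0$. For the inductive step with $p \ge 1$, I would apply Proposition~\ref{prop:cart-comp} with the factorization $2^p = 2\cdot 2^{p-1}$ and $2^q = 2\cdot 2^{q-1}$, taking $a = b = 2$ and $c = 2^{p-1}$, $d = 2^{q-1}$, so that $ac = 2^p$ and $bd = 2^q$. This yields
\[ \trank(K_{2^p}\squareop K_{2^q}) \le \trank(K_2\squareop K_2)\cdot \trank(K_{2^{p-1}}\squareop K_{2^{q-1}}). \]
Now $K_2\squareop K_2 = Q_2$, so Theorem~\ref{thm:trank-cube} gives $\trank(K_2\squareop K_2) = 3$. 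Since $p-1 \le q-1$, the induction hypothesis bounds the second factor by $2^{(q-1)-(p-1)}\,3^{p-1} = 2^{q-p}\,3^{p-1}$, and multiplying the two factors gives $3\cdot 2^{q-p}\,3^{p-1} = 2^{q-p}\,3^p$, completing the induction.

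There is no substantial obstacle here beyond bookkeeping: the real content is already supplied by Proposition~\ref{prop:cart-comp} together with the value $\trank(Q_2) = 3$. The only points requiring care are choosing the decomposition that removes exactly one factor of $2$ from each side at each step, so that the $K_2\squareop K_2$ factor contributes the ratio $3$ precisely $p$ times while the excess dimensions of $n$ survive as factors of $2$, and verifying the exponent identity $2^{p\log_2 3} = 3^p$ that reconciles the stated bound $n\, m^{\log_2(3)-1}$ with the form $2^{q-p}\,3^p$ produced by the recursion.
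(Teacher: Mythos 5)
Your proof is correct and follows essentially the same route as the paper: both argue by induction on the power of two in $m$, with base case $m=1$ (where $\trank(K_1\squareop K_n)=n$) and an inductive step that peels off one $K_2\squareop K_2$ factor via Proposition~\ref{prop:cart-comp}, contributing the factor $3$ each time. The only cosmetic differences are that you rewrite the bound in the equivalent form $2^{q-p}3^p$ and obtain $\trank(K_2\squareop K_2)=3$ from Theorem~\ref{thm:trank-cube} (since $K_2\squareop K_2=Q_2$), whereas the paper exhibits an explicit $2\times 2$ matrix witnessing $\trank(K_2\squareop K_2)\le 3$.
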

\begin{proof}
Observe that 
$\left[\begin{array}{cc}
1 & 0\\
0 & 2\\
\end{array}
\right]$
is a $2$-ranking witnessing that $\trank(K_2\squareop K_2) \le 3$.  If $m=1$, then $\trank(K_m\squareop K_n) = n$, and so the bound holds.  Otherwise, by Proposition~\ref{prop:cart-comp} and induction, we have that $\trank(K_m\squareop K_n) \le \trank(K_{m/2} \squareop K_{n/2})\cdot\trank(K_2\squareop K_2) \le \frac{n}{2} \left(\frac{m}{2}\right)^{\log_2(3)-1} \cdot 3  = nm^{\log_2(3)-1}$.
\end{proof}

When $m$ and $n$ are not powers of two, we may apply Corollary~\ref{cor:square-upper} to $K_{m'}\squareop K_{n'}$ where $m'$ and $n'$ are the least powers of two larger than $m$ and $n$, respectively.  Since $m' < 2m$ and $n' < 2n$, this gives $\trank(K_m\squareop K_n) < 3nm^{\log_2(3) - 1}$ for general $m$ and $n$.  To prove a lower bound on $\trank(K_m \squareop K_n)$, we restrict the number of times that certain ranks can appear.

\begin{lemma}\label{lem:rank-restrict}
In a $2$-ranking of $K_m\squareop K_n$, each column of height $m$ contains $k$ ranks which are assigned to at most $k$ vertices for $1\le k\le m$.
\end{lemma}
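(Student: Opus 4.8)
The plan is to argue directly with the matrix encoding described before the lemma. Write $A$ for the $(m\times n)$-matrix recording the $2$-ranking, and recall its two structural features: every row and every column of $A$ has distinct entries, and whenever $A(i,j)=A(i',j')$ with $i\ne i'$ and $j\ne j'$, both opposite corners $A(i,j')$ and $A(i',j)$ are strictly larger than $A(i,j)$. Since $\trank(K_m\squareop K_n)$ is phrased in terms of this matrix, I would keep the entire argument at the level of rows, columns, and corners.

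The heart of the proof is a single per-rank bound. Fix a column $j$ and list its $m$ distinct ranks in decreasing order. I claim that the $t$-th largest rank in column $j$ is assigned to at most $t$ vertices of the whole matrix. Granting this claim, the lemma follows at once and uniformly in $k$: the $k$ largest ranks of column $j$ are $k$ distinct ranks, and the $t$-th of them (for $t\le k$) is assigned to at most $t\le k$ vertices, so these are $k$ ranks each assigned to at most $k$ vertices, as required.

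To establish the claim, I would let $\rho$ be the $t$-th largest rank of column $j$, located at a cell $(p,j)$, and count its appearances. Inside column $j$ it appears exactly once, since columns have distinct entries. For any appearance outside column $j$, say at $(i',j')$ with $j'\ne j$, the distinct-entry condition on row $p$ forces $i'\ne p$, so $(p,j)$ and $(i',j')$ lie in different rows and different columns and hence are at distance two; the corner condition then forces the in-column corner $A(i',j)$ to exceed $\rho$. Because only $t-1$ entries of column $j$ are larger than $\rho$, the row index $i'$ must be one of the $t-1$ rows hosting those larger in-column entries. Finally, since each row has distinct entries, each of these $t-1$ rows supplies at most one external appearance of $\rho$. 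Adding the single in-column appearance gives at most $(t-1)+1=t$ appearances in all.

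The step I expect to require the most care is precisely this confinement argument: recognizing that every external appearance of $\rho$ is pinned to one of the $t-1$ rows determined by the larger in-column ranks, and that each such row contributes at most one appearance. Both facts rest on the distinct-entry conditions together with the corner condition, and the one easy-to-overlook point is verifying $i'\ne p$ so that the corner condition genuinely applies. Once the bookkeeping is pinned down, the sum $(t-1)+1=t$ is immediate and the lemma drops out by taking the $k$ largest ranks of the column.
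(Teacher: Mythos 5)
Your proof is correct and uses the same key mechanism as the paper's: the corner condition forces every appearance of a high rank outside column $j$ to lie in a row whose column-$j$ entry is strictly larger, and the distinct-entries property of rows then bounds the count. The only difference is bookkeeping --- you count appearances rank-by-rank (giving the mildly sharper fact that the $t$-th largest rank of a column appears at most $t$ times), whereas the paper argues that all appearances of the top $k$ ranks are confined to the $k$ rows hosting them --- so this is essentially the paper's argument.
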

\begin{proof}
Let $A$ be an $(m\times n)$-matrix representing a $2$-ranking of $K_m\squareop K_n$, and let $x$ be the $j$th column in $A$.  Let $R$ be the set of rows containing the $k$ highest ranks in $x$, and let $S = \{A(i,j)\st i\in R\}$.  We claim that each rank in $S$ appears only in rows in $R$.  Since each rank appears at most once in each row, it then follows that each of the $k$ ranks in $S$ is assigned to at most $k$ vertices.

Suppose that $A(i,j) = A(i',j')$ where $i\in R$.  Since $A$ is a $2$-ranking, it must be that $A(i',j) > A(i,j)$, which implies that $A(i',j)$ is among the $k$ highest ranks in $x$.  Therefore $i'\in R$ also.  It follows that each rank in $S$ appears only in rows in $R$.
\end{proof}

Lemma~\ref{lem:rank-restrict} forces a nontrivial number of ranks in a $2$-ranking of $K_m\squareop K_n$.
\begin{theorem}\label{thm:lower}
We have $\trank(K_m\squareop K_n) \ge nH_m$, where $H_m$ is the Harmonic series $1 + \frac{1}{2} + \frac{1}{3} + \cdots + \frac{1}{m}$.
\end{theorem}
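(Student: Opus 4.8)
The plan is to fix an optimal $2$-ranking of $K_m\squareop K_n$, represent it as an $(m\times n)$-matrix $A$ whose entries are the assigned ranks, and then lower bound the number of \emph{distinct} ranks appearing in $A$ by a weighting argument driven by Lemma~\ref{lem:rank-restrict}. Since any $2$-ranking using ranks in $\{1,\ldots,t\}$ employs at most $t$ distinct ranks, it suffices to show that at least $nH_m$ distinct ranks occur; this will give $\trank(K_m\squareop K_n)\ge nH_m$.

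First I would record the key consequence of Lemma~\ref{lem:rank-restrict}. For each column and each $i\in[m]$, applying the lemma with $k=i$ shows that the $i$ highest ranks in that column are each assigned to at most $i$ vertices; in particular the rank occupying position $i$ from the top of a column is used at most $i$ times in all of $A$. Equivalently, if a rank $r$ is used exactly $c$ times, then in every column in which $r$ appears it must occupy a position at least $c$ from the top: otherwise $r$ would be among the top $c-1$ ranks of that column and hence used at most $c-1$ times, a contradiction.

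Next I would set up the charging. View each column as having $m$ slots labelled by position $1,\ldots,m$ from the top, giving $nm$ slots in total, each occupied by one rank, and assign the slot at position $i$ the weight $1/i$. The total weight is then $n\sum_{i=1}^m \tfrac1i = nH_m$. I would then bound the weight accumulated by a single rank: because entries within a column are distinct, a rank $r$ used $c_r$ times occupies exactly $c_r$ slots, lying in $c_r$ distinct columns, and by the previous paragraph each such slot has position at least $c_r$ and hence weight at most $1/c_r$. Summing over these $c_r$ slots, the total weight carried by $r$ is at most $c_r\cdot(1/c_r)=1$.

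Finally, comparing the two ways of counting total weight closes the argument: the number of distinct ranks is at least the total weight divided by the maximum weight per rank, namely $nH_m/1=nH_m$, so $\trank(K_m\squareop K_n)\ge nH_m$. I expect the only subtle point to be the translation in the second paragraph between ``the $i$th-highest rank in a column is used at most $i$ times'' and ``a rank used $c$ times never appears above position $c$,'' which is precisely what forces the per-rank weight to telescope to $1$; the remaining steps are routine bookkeeping. As a sanity check, the bound is tight for $K_1\squareop K_n = K_n$ (giving $n$) and for $K_2\squareop K_2 = C_4$ (giving $3$), and since $H_m=\Theta(\log m)$ it matches the claimed $\Omega(n\log m)$ lower bound.
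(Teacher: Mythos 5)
Your proof is correct, but it finishes the argument by a genuinely different route than the paper. Both proofs rest on Lemma~\ref{lem:rank-restrict}; the paper uses it only in its literal, existential form (each column contains $k$ vertices whose ranks appear at most $k$ times in all of $A$), which yields the cumulative constraints $\sum_{i=1}^k i a_i \ge kn$, where $a_i$ is the number of ranks used exactly $i$ times, and then the paper extracts the bound by an optimization step: an exchange argument showing that the minimum of $\sum_i a_i$ subject to these constraints forces equality in every constraint, hence $a_k = n/k$ and $\sum_i a_i = nH_m$. You bypass that optimization entirely with a weighted double count---weight $1/i$ on the $i$th-highest cell of each column, total weight $nH_m$, at most weight $1$ per rank---which is more direct and is, in effect, a dual certificate for the paper's minimization; it also silently handles the boundary details (such as what happens at $k=m$) that the exchange argument must treat. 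The one point you should tighten: your charging needs the \emph{positional} form of the lemma, namely that the $k$ highest ranks of each column are each used at most $k$ times (equivalently, a rank used $c$ times sits at depth at least $c$ in every column containing it). The lemma as stated asserts only that $k$ such ranks \emph{exist}, and that abstract statement does not by itself imply the positional claim (it is compatible, say, with a column whose top rank is used twice while its second rank is used once). The positional form is true, and it is precisely what the proof of Lemma~\ref{lem:rank-restrict} establishes---the set $S$ there consists of the $k$ highest ranks of the column---so your argument is sound; but you should either cite that proof explicitly or rederive the one line you need from the $2$-ranking condition: if $A(i,j)=A(i',j')$, then $A(i',j)>A(i,j)$, so every other occurrence of a rank forces a strictly larger entry in the given column, and a rank at depth $p$ therefore occurs at most $p$ times.
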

\begin{proof}
Let $A$ be an $(m\times n)$-matrix encoding an optimal $2$-ranking of $K_m\squareop K_n$, and let $a_k$ be the number of ranks that $A$ assigns to exactly $k$ vertices.  Note that $\trank(K_m\squareop K_n) = \sum_{k=1}^m a_k$.  We claim that for $1\le k\le m$, we have that $\sum_{i=1}^k ia_i \ge kn$.  Indeed, $\sum_{i=1}^k ia_i$ counts the number of vertices in $K_m\squareop K_n$ whose ranks appear at most $k$ times in $A$.  By Lemma~\ref{lem:rank-restrict}, for $1\le k\le m$, each of the $n$ columns in $A$ is associated with $k$ such vertices.  Therefore $\sum_{i=1}^k ia_i \ge kn$ as claimed.  

Let $a_1, \ldots, a_m$ minimize $\sum_{i=1}^m a_i$ subject to the conditions $\sum_{i=1}^k ia_i \ge kn$ for $k\in[m]$.  We claim that in each constraint, equality holds.  Indeed, if $k$ is the least integer such that $\sum_{i=1}^k ia_i > kn$, then we may reduce $a_k$ by a positive $\varepsilon$ while still satisfying the constraints $\sum_{i=1}^\ell ia_i \ge \ell n$ for $1\le \ell \le k$.  If we also increase $a_{k+1}$ by $\frac{k}{k+1} \varepsilon$, then all constraints are satisfied, but we have reduced $\sum_{i=1}^m a_i$ by $\frac{1}{k+1}\varepsilon$, contradicting the minimality of $\sum_{i=1}^m a_i$.  

Since equality holds in all constraints, we conclude $a_k = n/k$ for each $k$ and $\sum_{i=1}^m a_i = nH_m$.
\end{proof}

When $n \ge m!$, Theorem~\ref{thm:lower} gives the correct order of growth of $\trank(K_m\squareop K_n)$.  In fact, equality holds when $m! \mid n$.

\begin{theorem}\label{thm:equality}
If $m!\mid n$, then $\trank(K_m\squareop K_n) = nH_m$.
\end{theorem}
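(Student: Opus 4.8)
The lower bound $\trank(K_m\squareop K_n)\ge nH_m$ is already delivered by Theorem~\ref{thm:lower}, so the entire task is to exhibit a $2$-ranking of $K_m\squareop K_n$ that uses exactly $nH_m$ ranks. The plan is to first reduce to the single case $n=m!$. Writing $n=m!\cdot N$ (possible since $m!\mid n$) and applying Proposition~\ref{prop:cart-comp} with $a=m$, $b=m!$, $c=1$, $d=N$ gives $\trank(K_m\squareop K_n)\le \trank(K_m\squareop K_{m!})\cdot\trank(K_1\squareop K_N)$. Since $K_1\squareop K_N=K_N$ has $\trank(K_N)=N$, it suffices to prove $\trank(K_m\squareop K_{m!})\le m!\,H_m$, as this yields $\trank(K_m\squareop K_n)\le N\cdot m!\,H_m = nH_m$ and matches the lower bound.

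For $K_m\squareop K_{m!}$ I would index the $m!$ columns by the permutations $\sigma$ in the symmetric group $S_m$ and the $m$ rows by $[m]$, and assign to cell $(i,\sigma)$ the \emph{level} $\sigma(i)$. Each column then exhibits every level in $[m]$ exactly once, the level-$\ell$ cell of column $\sigma$ occupying row $\sigma^{-1}(\ell)$. I would reserve a block of consecutive ranks for each level, with level $1$ receiving the highest ranks and level $m$ the lowest, so that comparing ranks reduces to comparing level indices (smaller index meaning larger rank) except within a single level. Within level $\ell$ the columns are grouped as follows: fix the rows occupied by the higher levels $\ell+1,\ldots,m$ to obtain a class of $\ell!$ permutations, and partition each class into orbits of size $\ell$ by cyclically shifting the rows that carry levels $1,\ldots,\ell$. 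Each orbit receives one distinct rank from level $\ell$'s block, shared by its $\ell$ columns. This produces $m!/\ell$ ranks at level $\ell$, hence $\sum_{\ell=1}^m m!/\ell = m!\,H_m$ ranks in total, exactly the target.

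It then remains to verify the three conditions of a $2$-ranking, and this is where the work lies. Distinctness along columns is immediate, since the $m$ levels in a column land in distinct blocks; distinctness along rows holds because two equal entries in one row would force the shared rank of some level-$\ell$ orbit into the same row twice, whereas a cyclic shift places an orbit's level-$\ell$ cells in $\ell$ distinct rows. The crux is the diagonal condition: if two cells carry equal rank they must be the level-$\ell$ cells of two columns $\sigma$ and its nonzero shift, and the two opposite corners then sit, in each of these columns, at the row where the \emph{other} column carries level $\ell$. I expect the main obstacle to be showing that these opposite corners always carry a strictly smaller level; the idea that resolves it is precisely that each orbit only permutes the rows carrying levels $1,\ldots,\ell$ and does so by a cyclic shift, which forces the level at every opposite corner into $\{1,\ldots,\ell-1\}$ and hence into a strictly higher rank block. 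Once this is checked, the construction is a valid $2$-ranking with $m!\,H_m$ ranks, completing the upper bound.
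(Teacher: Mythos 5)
Your proposal is correct, and it follows the paper's skeleton exactly in the outer layers: the lower bound is quoted from Theorem~\ref{thm:lower}, and the reduction of $m!\mid n$ to the single case $n=m!$ via Proposition~\ref{prop:cart-comp} with $\trank(K_1\squareop K_N)=N$ is precisely the paper's first step. Where you genuinely diverge is the core construction of a $2$-ranking of $K_m\squareop K_{m!}$ with $m!\,H_m$ ranks. The paper proceeds by induction on $m$: it takes $m$ rank-shifted copies $A'_1,\ldots,A'_m$ of an optimal ranking of $K_{m-1}\squareop K_{(m-1)!}$, forms blocks $M_i$ by inserting a row of shared ``low'' ranks into the $i$th row of the $i$th copy, and sets $A=[M_1\cdots M_m]$; the diagonal condition then splits into two cases (equal low ranks lie in distinct blocks, so opposite corners are high; equal high ranks lie in the same block, handled by the induction hypothesis). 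You instead give a closed-form construction: columns indexed by $S_m$, cell $(i,\sigma)$ placed at level $\sigma(i)$, with rank blocks ordered so smaller level means larger rank, and level-$\ell$ ranks shared along orbits of a free $\ZZ_\ell$-action (cyclic shifts of the $\ell$ rows carrying levels $1,\ldots,\ell$). Your verification is sound, including the crux: two cells sharing a level-$\ell$ rank come from columns differing by a nontrivial cyclic shift of those $\ell$ rows, each opposite corner sits in one of those rows, and it cannot carry level $\ell$ (that would force the shift to fix a point of an $\ell$-cycle), so it carries a level in $\{1,\ldots,\ell-1\}$ and hence a strictly higher rank; the count $\sum_{\ell=1}^m m!/\ell = m!\,H_m$ is also right. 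The trade-off: the paper's induction makes verification nearly effortless at the cost of leaving the ranking implicit, while your explicit construction is self-contained, needs no induction, and makes visible that exactly $m!/\ell$ ranks are each used $\ell$ times --- exactly the multiplicity profile that the optimization argument in Theorem~\ref{thm:lower} shows any optimal ranking must have, so your matrix transparently meets the lower bound with equality. (In fact your construction can be read as an unrolled instance of the paper's recursion, with the paper's low ranks corresponding to your level $m$, but the orbit description and direct verification are a legitimately different, and arguably more illuminating, route.)
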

\begin{proof}
Theorem~\ref{thm:lower} gives the lower bound.  We claim that it suffices to prove $\trank(K_m\squareop K_{m!})\le (m!)H_m$.  Indeed, with $n=tm!$, the general case would then follow from Proposition~\ref{prop:cart-comp}, since $\trank(K_m\squareop K_n) \le \trank(K_m\squareop K_{m!}) \cdot \trank(K_1\squareop K_t) = (m!)H_m\cdot t = nH_m$.

We prove that $\trank(K_m\squareop K_{m!}) = (m!)H_m$ by induction on $m$.  For $m=1$, the statement is trivial.  Suppose that $m\ge 2$ and let $A'$ be an $((m-1)\times (m-1)!)$-matrix encoding an optimal $2$-ranking of $K_{m-1}\squareop K_{(m-1)!}$.  By shifting the ranks appropriately, let $A'_1, \ldots, A'_m$ be copies of $A'$ that use disjoint intervals of ranks, starting with rank $(m-1)!+1$.  The ranks appearing in $A'_1, \ldots, A'_m$ are \emph{high}, and the ranks in $[(m-1)!]$ are \emph{low}.  

We construct an $(m\times m!)$-matrix $A$ encoding a $2$-ranking of $K_m\squareop K_{m!}$ as follows.  Let $M_i$ be an $(m\times (m-1)!)$-matrix such that deleting the $i$th row of $M_i$ gives $A'_i$ and whose $i$th row contains each low rank.  Let $A = [M_1 \cdots M_m]$.  The rows and columns of $A$ have distinct entries.  Suppose that $A(i,j) = A(i',j')$.  If $A(i,j)$ and $A(i',j')$ are both low ranks, then columns $j$ and $j'$ belong to distinct blocks of $A$ and so $A(i',j)$ and $A(i,j')$ are both high ranks.  If $A(i,j)$ and $A(i',j')$ are both high ranks, then columns $j$ and $j'$ belong to the same block of $A$ and so the opposite corners have higher rank by induction.  It follows that $A$ is a $2$-ranking.  Since $A$ uses $(m-1)!$ low ranks and $m\cdot [(m-1)!H_{m-1}]$ high ranks, we have that $\trank(K_m\squareop K_{m!}) \le (m-1)! + m!H_{m-1} = m!(1/m + H_{m-1}) = m!H_m$. 
\end{proof}

Using that $H_m = (1+o(1))\ln m$, we obtain an asymptotic formula for $\trank(K_m\squareop K_n)$ when $m$ is constant.

\begin{corollary}\label{cor:asymptotic}
For each positive integer $m$, we have that $\trank(K_m\squareop K_n) = (1+o(1))n\ln m$ as $n\to\infty$.
\end{corollary}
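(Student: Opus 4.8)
The plan is to sandwich $\trank(K_m\squareop K_n)$ between two quantities that both grow like $nH_m$, and then pass from $H_m$ to $\ln m$. The lower bound is immediate: Theorem~\ref{thm:lower} already gives $\trank(K_m\squareop K_n)\ge nH_m$ for all $n$. For the upper bound I would reduce the general case to the case $m!\mid n$ covered by Theorem~\ref{thm:equality}. The key auxiliary fact is that $\trank$ is monotone under taking subgraphs: if $H$ is a subgraph of $G$, then restricting any $2$-ranking of $G$ to $V(H)$ yields a $2$-ranking of $H$, since every path of length at most $2$ in $H$ is also such a path in $G$, and well-rankedness of a path depends only on the ranks along it. In particular $K_m\squareop K_n$ is a subgraph of $K_m\squareop K_{n'}$ whenever $n\le n'$, so $\trank(K_m\squareop K_n)\le \trank(K_m\squareop K_{n'})$.

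Taking $n'$ to be the least multiple of $m!$ with $n'\ge n$, I would observe that $n\le n'< n+m!$, while Theorem~\ref{thm:equality} evaluates $\trank(K_m\squareop K_{n'})=n'H_m$ exactly. Combining the two estimates gives
\[ nH_m \le \trank(K_m\squareop K_n) \le n'H_m < (n+m!)H_m. \]
For fixed $m$ the quantity $m!\,H_m$ is a constant, so the upper bound equals $(1+m!/n)\,nH_m=(1+o(1))\,nH_m$ as $n\to\infty$, and therefore $\trank(K_m\squareop K_n)=(1+o(1))\,nH_m$. The final step is to substitute the standard harmonic-number estimate $H_m=(1+o(1))\ln m$, exactly as indicated in the sentence preceding the statement, to obtain $\trank(K_m\squareop K_n)=(1+o(1))\,n\ln m$.

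I expect the only genuine obstacle to be the upper bound for values of $n$ not divisible by $m!$; the monotonicity-plus-rounding argument dispatches it, the crucial point being that rounding $n$ up to the next multiple of $m!$ costs only an additive term of size $m!\,H_m$, a constant in $m$ and hence $o(n)$, which is negligible against the linear-in-$n$ main term. Everything else is bookkeeping: the lower bound is quoted directly from Theorem~\ref{thm:lower}, the exact value in the aligned case comes from Theorem~\ref{thm:equality}, and the passage from $H_m$ to $\ln m$ is the classical asymptotic.
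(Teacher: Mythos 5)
Your proposal is correct and follows essentially the same route as the paper: round $n$ up to the least multiple $n'$ of $m!$, apply Theorem~\ref{thm:equality} to $K_m\squareop K_{n'}$, and absorb the additive $m!\,H_m$ error into the $(1+o(1))$ factor. The only difference is that you explicitly justify the monotonicity step $\trank(K_m\squareop K_n)\le\trank(K_m\squareop K_{n'})$ via subgraph restriction, which the paper uses without comment.
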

\begin{proof}
The lower bound follows immediately from Theorem~\ref{thm:lower}.  For the upper bound, let $n'$ be the least multiple of $m!$ that is at least $n$.  By Theorem~\ref{thm:equality}, we have $\trank(K_m\squareop K_n) \le \trank(K_m\squareop K_{n'}) = n'H_m \le (n+m!)H_m = (1+(m!)/n)\cdot nH_m = (1+o(1))n\ln m$.
\end{proof}

In the diagonal case, our bounds are far apart.  Combining Theorem~\ref{thm:lower} and Corollary~\ref{cor:square-upper} gives $\Omega(n\log n) \le \trank (K_n\squareop K_n) \le O(n^{\log_2 3})$.  What is the order of growth of $\trank(K_n\squareop K_n)$?

\section{The $2$-ranking number of graphs with maximum degree $k$}

Let $G$ be a graph with $\Delta(G)=k$, where $\Delta(G)$ is the maximum degree of $G$.  Since $\chis(G) \le \trank(G)\le \chi(G^2) \le \Delta(G^2) + 1\le k^2 + 1$, it is interesting to ask for the maximum of $\chis(G)$ and $\trank(G)$ over all graphs $G$ with maximum degree at most $k$.  Fertin, Raspaud, Reed~\cite{fertinStar} proved that the maximum of $\chis(G)$ over all graphs with maximum degree at most $k$ is at least $\Omega(\frac{k^{3/2}}{(\log k)^{1/2}})$ and is at most $O(k^{3/2})$.  We make slight modifications to their probabilistic construction to show that the maximum of $\trank(G)$ over all graphs with maximum degree $k$ is at least $\Omega(k^2/\log k)$.

\begin{theorem}
For each $k$, there exists a graph $G$ with $\Delta(G) \le k$ and $\trank(G) \ge \Omega(k^2/\log k)$.
\end{theorem}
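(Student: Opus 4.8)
The plan is to adapt the first-moment construction of Fertin, Raspaud, and Reed~\cite{fertinStar}, replacing the configuration forbidden in a star coloring (a bicolored path on four vertices) by the configuration forbidden in a $2$-ranking, which involves only three vertices and is therefore far more plentiful. The starting point is a local reformulation: a rank assignment $f\st V(G)\to[t]$ is a $2$-ranking if and only if it is a proper coloring and, for every vertex $x$, the neighbors of $x$ whose rank exceeds $f(x)$ receive pairwise distinct ranks. Indeed, a path $u$--$x$--$v$ with $f(u)=f(v)$ fails to be well-ranked exactly when $f(x)<f(u)$, so the only obstruction beyond properness is a vertex $x$ with two equal-ranked neighbors, each of rank larger than $f(x)$. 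I will call such a triple $(x;u,v)$ a \emph{bad cherry}. I would take $G=G(n,p)$ with $p=(1-\varepsilon)k/n$ and $n=\lceil e^{ck}\rceil$ for a small constant $c$, set $t=\beta k^2/\log k$, and let $X$ count the maps $f\st [n]\to[t]$ that are $2$-rankings of $G$. The goal is to show that for suitable constants $\beta,\varepsilon,c$ we have $E[X]<1/2$ while $\Delta(G)\le k$ holds with probability exceeding $1/2$, so that some outcome simultaneously has maximum degree at most $k$ and admits no $2$-ranking with $t$ ranks.

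The key computation bounds $P(f\text{ is a }2\text{-ranking})\le P(G\text{ contains no bad cherry for }f)$, and the crux is to discard most cherries so as to gain independence. First I would split $[n]=A\cup B$ into halves and, for $x\in A$, retain only the cherries centered at $x$ with both leaves in $B$; the event $B_x$ that such a bad cherry exists depends only on the edges from $x$ into $B$, and for distinct $x\in A$ these edge sets are disjoint. Hence the events $\overline{B_x}$ are independent and $P(f\text{ valid})\le\prod_{x\in A}P(\overline{B_x})$. Writing $m_r$ for the number of rank-$r$ vertices in $B$, the number of neighbors of $x$ of rank $r$ is binomial, and over the ranks $r>f(x)$ one obtains $P(\overline{B_x})\le\exp(-c_0 p^2\sum_{r>f(x)}\binom{m_r}{2})$. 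The essential point is that restricting leaves to $B$ costs only a constant factor: each center keeps its entire cherry-fan, so after summing over $x\in A$ the exponent still has order $p^2 n^3/t=k^2 n/t$.

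To finish, I would use that $\sum_r\binom{m_r}{2}$ is Schur-convex, so balanced colorings are extremal (up to constants) and maximize $P(f\text{ valid})$. This yields $E[X]\le t^n\cdot\exp(-\Omega(k^2 n/t))=\exp(n(\ln t-\Omega(k^2/t)))$. With $t=\beta k^2/\log k$ we have $k^2/t=\Theta(\log k)$ and $\ln t=\Theta(\log k)$, so taking $\beta$ small makes the per-vertex exponent negative and $E[X]=e^{-\Omega(n)}<1/2$. Separately, a Chernoff bound gives $P(\deg v>k)\le e^{-\Omega(\varepsilon^2 k)}$, and a union bound over the $n=e^{ck}$ vertices keeps $P(\Delta(G)>k)<1/2$ once $c$ is small relative to $\varepsilon^2$. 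Combining the two estimates, a graph $G$ with $\Delta(G)\le k$ and $\trank(G)>t=\Omega(k^2/\log k)$ exists, matching the upper bound $\trank(G)\le k^2+1$ up to the logarithmic factor.

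The hard part will be handling the dependence among bad cherries without sacrificing their count. A naive reduction to edge-disjoint cherries would retain only $O(n^2)$ of them and collapse the exponent to $O(k^2)$, losing the crucial factor of $n$ and thus all of the strength of the bound; the bipartite split $A\cup B$ is precisely what rescues the argument, since it keeps each center's whole cherry-fan and sacrifices only a constant. A second delicate point is that I must secure $\Delta(G)\le k$ directly rather than by deleting high-degree vertices, because deletion can only decrease $\trank$ and would invalidate the lower bound; this constraint is what forces the regime $n=e^{\Theta(k)}$, where the union bound over degrees still succeeds while $E[X]$ remains well below $1$.
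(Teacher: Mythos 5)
Your high-level strategy --- a first-moment bound over all rank assignments of $G(n,p)$, using length-$2$ violations made independent through disjoint edge sets --- is the same as the paper's, and your local reformulation (proper coloring plus no ``bad cherry'') is correct. But the implementation has a genuine gap at the step where you invoke Schur-convexity. Your bound $P(f\text{ valid})\le \exp\bigl(-c_0p^2\sum_{x\in A}\sum_{r>f(x)}\binom{m_r}{2}\bigr)$ uses a \emph{fixed} bipartition $A\cup B$, and the double sum is not a function of the class sizes $\{m_r\}$ alone: it depends on how ranks are distributed across the two sides. Consider any $f$ that assigns every vertex of $A$ a rank in $(t/2,t]$ and every vertex of $B$ a rank in $[1,t/2]$. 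Then no rank present in $B$ ever satisfies $r>f(x)$ for $x\in A$, so your exponent is $0$ and the bound is vacuous --- even if every class has exactly $n/t$ vertices, i.e.\ the coloring is perfectly balanced. There are $(t/2)^n=t^n2^{-n}$ such colorings, so the sum of your per-coloring bounds is at least $(t/2)^n\gg 1$ and the first-moment computation yields nothing. Schur-convexity cannot rescue this, because the obstruction is the correlation between rank and side, not imbalance among the $m_r$. For such colorings the violations live elsewhere (non-properness inside $A$, or cherries centered in $B$ with leaves in $A$), and your bound has discarded all of those events. (A secondary, fixable slip: $P(\overline{B_x})\le\exp(-c_0p^2\binom{m_r}{2})$ fails when $pm_r\gg1$, since the probability of at most one success in a Binomial$(m_r,p)$ is $e^{-\Theta(pm_r)}$, not $e^{-\Theta(p^2m_r^2)}$.)

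The missing idea is to let the center/leaf roles be determined by the coloring rather than by a fixed vertex split, and this is exactly what the paper does: discard one vertex from each odd rank class, pair the remaining same-ranked vertices into pairs $S_1,\dots,S_\ell$ with $\ell\ge n/4$, order the pairs by rank, and for each pair of pairs $\{S_i,S_j\}$ with $i<j$ consider the paths $uwv$ with center $w$ in the lower pair $S_i$ and endpoints $u,v$ forming the higher pair $S_j$. Any such path kills $f$ (whether $f(w)<f(u)=f(v)$ or all three ranks are equal), it appears with probability at least $p^2$, and these paths are edge-disjoint across pairs of pairs, hence independent, giving $P(A_f)\le(1-p^2)^{\binom{n/4}{2}}$ \emph{uniformly in $f$}. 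You could likewise repair your argument by splitting the vertices into low-ranked and high-ranked halves with respect to $f$ itself, so that every retained cherry automatically has leaves ranked above its center. Note also that your dismissal of edge-disjoint families as too weak is an artifact of your regime $n=e^{\Theta(k)}$: with the paper's parameters ($n=\Theta(k^2/\log k)$, $t=n/2$, $p=c\sqrt{\log n/n}$, and $\Delta(G)\le 2np\le k$ with high probability), the edge-disjoint family of $\Theta(n^2)$ potential violations already beats the entropy $(n/2)^n$, so the exponential-size graph and the Chernoff-plus-union degree argument are unnecessary.
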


\begin{proof}
Choose $n$ so that $n$ is even and $2np\le k$, where $p=c(\log n/n)^{1/2}$ for some constant $c$ to be chosen later.  Since we may assume that $k$ is sufficiently large, we may assume that $n$ is also sufficiently large.  Let $G$ be a random graph chosen from $G(n,p)$.  Each vertex in $G$ has expected degree $(n-1)p$, and it is well known (see, for example,~\cite{bollobas1998random}) that $\PP(\Delta(G) \le 2np) \to 1$ as $n\to \infty$.  For each function $f\st V(G) \to [n/2]$, let $A_f$ be the bad event that $f$ is a $2$-ranking of $G$.  Applying the union bound, we have that $\PP(\trank(G)\leq \frac{n}{2}) = \PP(\bigcup_f A_f) \le \sum_f \PP(A_f)$.

Fix a function $f\st V(G)\to [n/2]$.  Discarding one vertex from each rank class with an odd number of vertices, we may partition the remaining vertices into pairs $S_1, \ldots, S_\ell$ such that both vertices on $S_i$ have the same rank under $f$.  Since at most $n/2$ vertices are discarded, we have $\ell \ge (1/2)(n - n/2) = n/4$.  Index the pairs so that $i\le j$ implies that $f(u)\le f(v)$ when $u\in S_i$ and $v\in S_j$.  For each pair $\{S_i, S_j\}$ with $i<j$, the probability that $G$ contains some path $uwv$ such that $u,v\in S_j$ and $w\in  S_i$ is at least $p^2$.  If this happens, then $f$ is not a $2$-ranking since either $f(u) = f(v) = f(w)$ or $f(u) = f(v) > f(w)$.  

\begin{center}\includegraphics{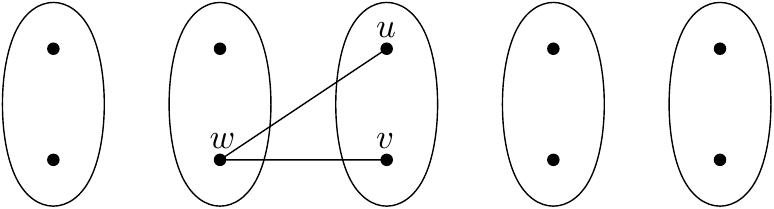}\end{center}
Since the paths $uwv$ form an edge-disjoint family as we range over the pairs $\{S_i,S_j\}$, it follows that the pairs $\{S_i, S_j\}$ give independent chances for $A_f$ to fail.  It follows that $\PP(A_f) \leq (1-p^2)^{\binom{n/4}{2}}$. For sufficiently large $n$, it follows that 
\[
\PP(\trank(G)\le \frac{n}{2}) \le \sum_f \PP(A_f) \le (n/2)^n (1-p^2)^{\binom{n/4}{2}} \le (n/2)^n e^{-\frac{p^2n^2}{33}} = \left(\frac{n}{2n^{c^2/33}}\right)^n.
\]
With $c=6$, we have that $\PP(\trank(G)\le \frac{n}{2}) \to 0$ as $n\to\infty$.  It follows that with probability tending to $1$, we have that $\Delta(G)\le k$ and $\trank(G) > n/2 \ge c' \frac{k^2}{\ln k}$ for some positive constant $c'$. 
\end{proof}

\section{The $2$-ranking number of subcubic graphs}

\newcommand{\diam}{\mathrm{diam}}

A graph $G$ is \emph{subcubic} if $\Delta(G)\le 3$.  The \emph{star list chromatic number} of $G$, denoted $\chi_s^\ell(G)$, is the minimum integer $t$ such that if each vertex $v$ in $G$ is assigned a list $L(v)$ of $t$ colors, there is a star coloring of $G$ in which each vertex $v$ receives a color from its list $L(v)$.  Albertson, Chappell, Kierstead, K{\"u}ndgen, and Ramamurthi~\cite{Albertson} gave an elegant proof that every subcubic graph $G$ satisfies $\chi_s^\ell(G)\le 7$.  It follows that $\chis(G) \le \chi_s^\ell(G)\le 7$ when $G$ is subcubic.  Chen, Raspaud, and Wang~\cite{chen20136} proved that every subcubic graph $G$ satisfies $\chis(G)\le 6$.  

Let $G$ be the $3$-regular graph obtained from $C_8$ by joining vertices at distance $4$.  Fertin, Raspaud, and Reed~\cite{fertinStar} proved that $\chis(G)=6$, and it follows that the result of Chen, Raspaud, and Wang is best possible.  

Here, we show that $\trank(G)\le 7$ when $G$ is subcubic.  Since $\trank(G)\ge \chis(G)$ always, the example of Fertin, Raspaud, and Reed shows that our bound cannot be reduced by more than $1$ in the general case.  Nonetheless, we believe the bound can be improved by $2$ aside from a single exception; see Conjecture~\ref{conj:subcubic}.  

An \emph{independent set} in $G$ is a set of vertices that are pairwise nonadjacent.  We use $N_G(u)$ for the set of neighbors of $u$ in $G$ and, when $S\subseteq V(G)$, we use $G[S]$ for the subgraph of $G$ induced by $S$. Vertices $u$ and $v$ in a graph $G$ are \emph{antipodal} if $\dist(u,v) = \diam(G)$, where $\diam(G)$ is the maximum distance between a pair of vertices in $G$.

\newcommand{\barS}{\overline{S}}
\begin{theorem}
If $G$ is subcubic, then $\trank(G)\le 7$.
\end{theorem}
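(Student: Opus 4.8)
The plan is to first recast the $2$-ranking condition in a purely local, order-theoretic form that is convenient for bounded degree. I would prove as a preliminary lemma that an assignment $f\colon V(G)\to\{1,\dots,t\}$ is a $2$-ranking if and only if $f$ is a proper coloring and, at every vertex $w$, the neighbors of $w$ whose rank exceeds $f(w)$ all receive distinct ranks. Indeed, a path $uwv$ with $f(u)=f(v)$ fails to be well-ranked exactly when $f(w)\le f(u)$, which is exactly the situation in which $w$ has two equal-ranked neighbors of rank at least $f(w)$. Writing $I_r=f^{-1}(r)$, this says $f$ is a $2$-ranking precisely when each $I_r$ is independent and no vertex has two neighbors in a common class strictly higher than its own. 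I would use this ``rainbow-above'' formulation throughout: the task becomes assigning each vertex an ordered class so that, at every vertex, the at most three neighbors lying in strictly higher classes occupy distinct classes, while neighbors in lower classes are unconstrained.

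With this tool I would reduce to the essential case. Distinct components impose no constraints on one another, so it suffices to rank each component. A vertex $v$ with $\deg(v)\le 1$ is immediately removable: given a $7$-ranking of $G-v$ with unique neighbor $w$, assigning $v$ any rank below $f(w)$ (after a harmless global shift if $f(w)=1$) makes every path through $w$ automatically well-ranked. I would therefore pass to a minimal counterexample $G$, a subcubic graph of fewest vertices with $\trank(G)\ge 8$, and argue that $G$ is connected with minimum degree at least $2$ and ultimately cubic, disposing of degree-$2$ vertices by the rainbow-above bookkeeping.

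For structural input I would take a \emph{maximal} independent set $S$, so that $S$ dominates $G$ and hence every vertex of $\barS=V(G)\setminus S$ has at least one neighbor in $S$; consequently $G[\barS]$ has maximum degree at most $2$ and is a disjoint union of paths and cycles. The natural attempt is to rank $G[\barS]$ with low ranks and $S$ with high ranks. Since $S$ is independent, each vertex of $S$ has all its neighbors below it and imposes no constraint from above; moreover a vertex $x\in\barS$ of degree $2$ in $G[\barS]$ has exactly one neighbor in $S$ and is likewise unconstrained from above. The only genuine conflicts come from vertices of $\barS$ that are path endpoints or isolated in $G[\barS]$, which have two or three neighbors in $S$ and thus force those neighbors into distinct higher classes. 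I would rank the paths and cycles of $G[\barS]$ with a small number of low ranks, using antipodal vertices on each long cycle to orient it consistently and keep the count of low ranks down.

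The main obstacle I anticipate is precisely the cost of these endpoint and isolated vertices. Placing all of $S$ above all of $\barS$ forces the ``conflict graph'' on $S$ (joining two members that share a neighbor in $\barS$) to be properly colored, and this graph can have degree as large as $6$, which by itself would demand far more than seven ranks. Overcoming this is where the rainbow-above freedom must be exploited: a low class need only be a packing in the prefix it completes, not in all of $G$, so lower ranks are cheap and the single genuinely global (packing-in-$G$) constraint can be pushed to the top rank. I therefore expect the real argument to \emph{interleave} the ranks of $S$ and $\barS$ rather than stratify them, choosing $S$ and the path/cycle structure so that few vertices of $\barS$ are saturated with $S$-neighbors, and then verifying in the reduction that when a vertex is reinserted its forbidden ranks never exhaust $\{1,\dots,7\}$. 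The saturated case, in which all of a vertex's higher-ranked neighbors already occupy the top ranks, is the one that genuinely requires the seventh rank and the global diameter/antipodal control, and is where I expect the delicate work to lie.
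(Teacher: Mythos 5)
Your write-up is a plan with the decisive step missing, and you say so yourself: after choosing a maximal independent set $S$ and noting that $G[\overline{S}]$ has maximum degree at most $2$, you stratify with $S$ \emph{on top}, observe that this forces a proper coloring of the conflict graph on $S$ (two $S$-vertices joined when they share a neighbor in $\overline{S}$), whose degree can be as large as $6$, concede that this ``would demand far more than seven ranks,'' and then defer the resolution to an unspecified ``interleaving'' of ranks (``where I expect the delicate work to lie''). That unresolved step \emph{is} the theorem; nothing in the proposal produces a ranking with at most $7$ ranks. There are also two smaller flaws in the reductions: in the degree-$1$ reduction, the ``harmless global shift'' when $f(w)=1$ pushes the ranks outside $\{1,\dots,7\}$ (the correct fix is to give $v$ any rank avoiding those of $w$ and of $w$'s other neighbors, at most $3$ forbidden values); and ``disposing of degree-$2$ vertices'' by deletion fails outright, since in $G-v$ both neighbors of $v$ could receive the top rank, leaving no admissible rank for $v$ (one would need contraction, not deletion).

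The repair is to flip your stratification, which is exactly what the paper does: put $S$ at the unique \emph{bottom} rank $0$, and give $\overline{S}$ the six \emph{high} ranks by properly coloring $G^2[\overline{S}]$, the square of $G$ restricted to $\overline{S}$. Maximality of $S$ gives the degree bound you were missing: each $v\in\overline{S}$ has a neighbor in $S$, so at most $2$ neighbors in $\overline{S}$, and each neighbor of $v$ contributes at most $2$ (if in $S$) or at most $1$ (if in $\overline{S}$) further vertices of $\overline{S}$ at distance $2$, whence $\Delta(G^2[\overline{S}])\le 6$. With this orientation the verification is immediate in your own rainbow-above language: equal high ranks lie at distance at least $3$ in $G$, and a length-$2$ path with both endpoints in $S$ has a high midpoint, so $\trank(G)\le 7$ once $\chi(G^2[\overline{S}])\le 6$. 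Brooks's theorem supplies that $6$-coloring unless $G^2[\overline{S}]$ contains $K_7$; since $G^2[\overline{S}]$ is connected with maximum degree $6$, the only exception is $G^2[\overline{S}]=K_7$, and the paper's remaining work is a structural analysis showing this forces $G$ to be the Petersen or Heawood graph, for which a different choice of maximal independent set (in the Heawood case, a vertex together with the four vertices antipodal to it) makes the same argument succeed. Your proposal contains neither the degree bound on the square, nor Brooks's theorem, nor the exceptional-case analysis, so the gap is not a technicality but the entire mechanism by which seven ranks suffice.
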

\begin{proof}
Let $G$ be a subcubic graph.  We may assume that $G$ is connected.  Let $S$ be a maximal independent subset of $V(G)$, and let $\barS = V(G) - S$.  Since $S$ is maximal, every vertex in $G$ is in $S$ or has a neighbor in $S$.  It follows that $|N_{G^2}(v) \cap \barS| \le 6$.  Indeed, if $u$ is a neighbor of $v$, then $u$ has at most $1$ other neighbor in $\barS$, or else $u$ would have $3$ neighbors in $\barS$, a contradiction.  

It follows that $\Delta(G^2[\barS]) \le 6$.  If $G^2[\barS]$ does not contain a copy of $K_7$, then by Brooks's theorem, $\chi(G^2[\barS])\le 6$.  Using a proper coloring of $G^2[\barS]$ with colors in $[6]$ and assigning rank $0$ to all vertices in $S$ gives a $2$-ranking of $G$.  Indeed, paths of length $1$ are well-ranked and $G$ contains no paths of length 2 joining vertices with nonzero ranks.

Since $G$ is connected, it follows that $G^2[\barS]$ is connected.  Since $G^2[\barS]$ is connected and has maximum degree at most $6$, if $G^2[\barS]$ contains a copy of $K_7$, then $G^2[\barS] = K_7$.  It follows that $\trank(G) \le 7$ unless $G^2[\barS] = K_7$. 

Suppose that $G^2[\barS] = K_7$.  This has several implications for the structure of $G[\barS]$.  First, we claim that every vertex in $G[\barS]$ has degree $0$ or degree $2$.  Since each vertex $u\in \barS$ has a neighbor in $S$, it follows that $u$ has at most $2$ neighbors in $G[\barS]$.  If the only neighbor of $u$ in $G[\barS]$ is $v$, then $v$ has at most $5$ neighbors in $G^2[\barS]$: at most $2$ from each of the neighbors of $v$ in $G$ besides $u$, and $u$ itself.  This contradicts that $G^2[\barS] = K_7$.

It follows that $G[\barS]$ is a $7$-vertex graph whose components are isolated vertices and cycles.  We claim that each cycle in $G[\barS]$ has length at least $5$.  Indeed, suppose that $v$ is in a cycle $C$ in $G^2[\barS]$ of length at most $4$, and let $u_1$ and $u_2$ be the neighbors of $v$ along $C$.  Each neighbor of $v$ in $G$ contributes at most $2$ neighbors of $v$ in $G^2[\barS]$.  Since $C$ has length at most $4$, the contributions of $u_1$ and $u_2$ have nonempty intersection.  It follows that $v$ has fewer than $6$ neighbors in $G^2[\barS]$, contradicting that $G^2[\barS] = K_7$.  

Suppose that $G[\barS]$ contains a 5-cycle $C$, and let $x$ and $y$ be the vertices in $G[\barS]$ that are not in $C$.  Let $u$ be a vertex in $C$.  Since $u$ is adjacent to $x$ and $y$ in $G^2[\barS]$, it must be that $u$ is adjacent in $G$ to a vertex $s_u\in S$ whose other two neighbors in $G$ are $x$ and $y$.  The vertices $\{s_u\st u\in V(C)\}$ have distinct neighborhoods of size $3$ and are therefore distinct.  This is not possible, since $x$ and $y$ would have $5$ neighbors in $G$.  Therefore $G[\barS]$ does not contain a 5-cycle.

Suppose that $G[\barS]$ contains a 7-cycle $C$, and let $u$ be a vertex in $C$.  Since $u$ is adjacent in $G^2[\barS]$ to the two vertices $x$ and $y$ at distance $3$ from $u$ in $C$, it must be that $u$ is adjacent in $G$ to a vertex $s_u$ such that $N_G(s_u) = \{u,x,y\}$.  Again, the vertices $\{s_u\st u\in V(C)\}$ have distinct neighborhoods of size $3$, and are therefore distinct.  This is impossible, since $x$ is adjacent in $G$ to $s_u$, $s_x$, and its two neighbors on $C$.  Therefore, $G[\barS]$ cannot contain a 7-cycle.

It follows that either $G[\barS]= C_6 + K_1$ or $G[\barS] = 7K_1$.  Suppose that $G[\barS]$ contains a $6$-cycle $C$ and let $x$ be the isolated vertex.  If $u$ is a vertex on $C$, then $u$ is adjacent in $G$ to a vertex $s_u$ whose neighbors are $u$, $x$, and the vertex on $C$ antipodal to $u$.  It follows that $G$ is the Petersen graph.  In the diagram below, vertices in $S$ are white and vertices in $\barS$ are black.
\begin{center}
\includegraphics[]{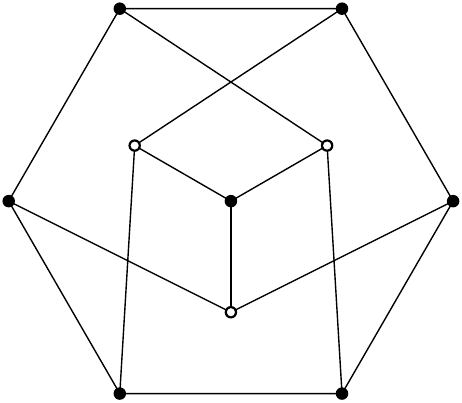}
\end{center}
Suppose that $G[\barS] = 7K_1$.  It follows that each vertex $u$ in $\barS$ is adjacent in $G$ to $3$ neighbors $v_1, v_2, v_3$ in $S$.  Moreover, we have $\bigcup_{i=1}^3 N_G(v_i) = \barS$ and $N_G(v_i) \cap N_G(v_j) = \{u\}$ for $i\ne j$.  Since $G$ is connected, we have that $G$ is a $3$-regular $(S,\barS)$-bigraph, and so $|S| = |\barS| = 7$.  It follows also that $G$ does not contain a copy of $C_4$, or else some vertex $u\in\barS$ would have neighbors $v_1$ and $v_2$ with $|N_G(v_1) \cap N_G(v_2)| \ge 2$.  Since $G$ is a bipartite 3-regular graph on 14 vertices with girth at least 6, it follows that $G$ is the Heawood graph.

As we have seen, if $G$ is subcubic, then $\trank(G)\le 7$, or $G$ is the Petersen graph, or $G$ is the Heawood graph.  If $G$ is the Petersen graph, then $G$ contains a maximal independent set $S$ of size $4$.  We may repeat the argument with $G[\barS]$ having $6$ vertices.  If $G$ is the Heawood graph, then a vertex $u$ and the four vertices antipodal to $u$ form a maximal independent set $S$ of size $5$.  We we may repeat the argument with $G[\barS]$ having $9$ vertices.
\end{proof}

Besides the example of Fertin, Raspaud, and Reed, we are not aware of another subcubic graph that requires $6$ ranks for a $2$-ranking.  Plausible candidates such as the Petersen graph and the Heawood graph admit $2$-rankings with only $5$ ranks.

\begin{conjecture}\label{conj:subcubic}
If $G$ is subcubic, then $\trank(G)\le 6$ and equality holds if and only if $G$ is the cubic graph obtained from $C_8$ by joining vertices at distance $4$.
\end{conjecture}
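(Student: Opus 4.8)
The conjecture has two parts: an upper bound ($\trank(G)\le 6$ for all subcubic $G$) and a characterization of when equality is forced. I would attack the upper bound first, since the characterization is really a statement about tightness that can only be formulated once we understand where the proof of $\trank(G)\le 7$ loses a rank. The plan is to strengthen the theorem just proved. In that argument, assigning rank $0$ to a maximal independent set $S$ and properly coloring $G^2[\barS]$ with $\{1,\ldots,6\}$ already gives a $6$-rank $2$-ranking whenever $G^2[\barS]$ is not $K_7$; the only obstruction was the case $G^2[\barS]=K_7$, which was shown to force $G$ to be the Petersen or Heawood graph, both of which were then handled by re-choosing $S$. So the genuine content is: prove that one can always choose $S$ so that $\chi(G^2[\barS])\le 5$, \emph{except} for the single graph in the conjecture.

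First I would formalize the reduction. By Brooks's theorem, $\chi(G^2[\barS])\le 5$ holds whenever $G^2[\barS]$ has maximum degree at most $5$ and is not $K_6$, or has maximum degree $6$ and is not $6$-regular except on small components. Since $\Delta(G^2[\barS])\le 6$ as shown, I would first rule out, by a counting or local-structure argument analogous to the degree-$0$-or-$2$ analysis in the proof, the possibility that $G^2[\barS]$ contains $K_6$ or is $6$-regular, for a \emph{well-chosen} maximal independent set $S$. The key flexibility is the choice of $S$: a vertex $u\in\barS$ achieves $\deg_{G^2[\barS]}(u)=6$ only when all three neighbors of $u$ lie in $S$ and their other neighbors are six distinct vertices of $\barS$, a very rigid configuration. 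I would argue that in any subcubic graph other than the exceptional one, there is a maximal independent set avoiding this rigidity at every vertex, so that $G^2[\barS]$ has a vertex of degree at most $5$ in each subgraph (i.e.\ is $5$-degenerate) or fails to be $K_6$, yielding a $5$-coloring and hence $\trank(G)\le 6$.

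For the equality direction, I would verify that the graph $G_0$ obtained from $C_8$ by joining antipodal vertices (the Wagner-type cubic graph, also known as the Möbius–Kantor-adjacent $V_8$) actually satisfies $\trank(G_0)=6$. The inequality $\trank(G_0)\ge\chis(G_0)=6$ is already supplied by the cited result of Fertin, Raspaud, and Reed together with $\chis\le\trank$, so this half is immediate and no new work is needed. The matching claim $\trank(G_0)\le 6$ follows from the universal upper bound once that is established, or can be exhibited by an explicit $6$-rank assignment. The harder half of the characterization is the \emph{uniqueness}: showing no other subcubic graph needs $6$ ranks. This reduces to proving that for every subcubic $G\ne G_0$ one can find an $S$ with $\chi(G^2[\barS])\le 5$, which is exactly the strengthened upper bound above; so the two parts are really one theorem.

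The main obstacle I anticipate is the case analysis controlling the choice of $S$. Unlike the $K_7$ analysis in the existing proof, which produced only two sporadic exceptions that could be dispatched by a single alternate choice of $S$, forcing $\chi(G^2[\barS])\le 5$ must contend with many more near-extremal local configurations, and a bad configuration at one vertex may be unavoidable for \emph{some} choices of $S$ while fixable for others. Making the swap argument global—showing that local repairs to $S$ can be performed simultaneously without creating new degree-$6$ or $K_6$ obstructions elsewhere—is where I expect the difficulty to concentrate, and it is presumably why the statement remains a conjecture rather than a theorem. A discharging argument on $G$, or an analysis of the few subcubic graphs whose \emph{every} maximal independent set yields a $6$-chromatic $G^2[\barS]$, seems the most promising route to isolate $G_0$ as the unique exception.
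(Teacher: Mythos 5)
You are proving Conjecture~\ref{conj:subcubic}, which the paper explicitly leaves open; there is no proof in the paper to compare against, and your proposal is (as you acknowledge) a plan of attack rather than a proof. The more serious problem is that the plan's central strengthening is false. You propose to show that every subcubic graph other than the exceptional graph $G_0$ has a maximal independent set $S$ with $\chi(G^2[\overline{S}])\le 5$, where $\overline{S}=V(G)\setminus S$. The Petersen graph is a counterexample: it has diameter $2$ and independence number $4$, so for \emph{every} maximal independent set $S$ the set $\overline{S}$ contains at least $6$ vertices that are pairwise within distance $2$; hence $G^2[\overline{S}]$ is a complete graph on at least $6$ vertices and $\chi(G^2[\overline{S}])\ge 6$. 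Yet the paper records that the Petersen graph admits a $2$-ranking with only $5$ ranks. The lesson is that the template ``rank $0$ on $S$, proper coloring of $G^2[\overline{S}]$ on the rest'' is strictly weaker than a general $2$-ranking: it forces every positive rank class to be independent in $G^2$, whereas a $2$-ranking allows two vertices of equal positive rank at distance $2$ provided all their common neighbors receive higher ranks. Any proof of the conjecture must exploit that extra freedom; your reduction cannot even recover the upper bound $\trank(G)\le 6$ for the Petersen graph, let alone the characterization of equality.

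There is also a logical slip in the uniqueness half. That equality holds only for $G_0$ means every other subcubic graph satisfies $\trank(G)\le 5$; within your framework this would require $\chi(G^2[\overline{S}])\le 4$ (four positive ranks plus rank $0$), not $\le 5$, so the upper bound and the uniqueness claim are not ``really one theorem'' --- the latter is strictly stronger, and again unreachable by the independent-set template (the Petersen graph has $\trank=5$ but no suitable $S$). Your appeals to Brooks's theorem are also off: a graph of maximum degree $6$ that is not $6$-regular still gets only $\chi\le 6$ from Brooks, and $5$-degeneracy yields $\chi\le 6$, not $\chi\le 5$; to save a color you would need maximum degree at most $5$ with no $K_6$ component, or $4$-degeneracy. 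The one part of your proposal that is sound is the easy half: $\trank(G_0)\ge 6$ does follow from $\chis(G_0)=6$ (Fertin, Raspaud, and Reed) together with $\chis\le\trank$, so the ``if'' direction of the equality claim needs no new work once the upper bound is in place.
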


\section{The product of a triangle and a cycle}

Applied to the product of a pair of cycles, Corollary~\ref{cor:prodcycles} states that $\trank(C_m\squareop C_n) = 5$ when $m$ and $n$ are divisible by $4$.  In this section, we show that the $2$-ranking number of cycle products may depend on the parity of the lengths of the factors.  In particular, we show that for sufficiently large $n$, the $2$-ranking number of $C_3 \squareop C_n$ is $5$ when $n$ is even and $6$ when $n$ is odd.  We represent a $2$-ranking of $C_3\squareop C_n$ with a $(3\times n)$-array $A$ such that $A(i,j)$ is the rank of $(u_i,v_j)\in V(C_3\squareop C_n)$.

\begin{lemma}\label{lem:upbd6}
If $n\ge 24$, then $\trank(C_3 \squareop C_n) \le 6$.
\end{lemma}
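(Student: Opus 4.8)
The plan is to give an explicit \emph{periodic} $2$-ranking of the infinite strip $C_3 \squareop \mathbb{Z}$ with period $4$ and then to close it up around $C_n$. Recall from the preceding discussion that a $2$-ranking of $C_3\squareop C_n$ is encoded by a $3\times n$ array $A$, and that the requirements are exactly: (i) each column has distinct entries; (ii) horizontally adjacent entries differ; (iii) for each row, if the two entries flanking a given entry are equal then the middle entry is larger (the row paths of length $2$); and (iv) for each vertex, if one of its neighbors in its own column and one horizontally adjacent neighbor have equal rank, then the vertex itself has larger rank (the ``$L$-shaped'' paths). I will use three \emph{low} ranks $\{1,2,3\}$ and three \emph{high} ranks $\{4,5,6\}$.

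First I would record the strip pattern. Let the columns alternate between \emph{high} columns, whose entries form a cyclic shift of $(4,5,6)$, and \emph{low} columns, whose entries form a cyclic shift of $(1,2,3)$, as in the period-$4$ tile
\[
\begin{array}{cccc}
4 & 1 & 5 & 2\\
5 & 2 & 6 & 3\\
6 & 3 & 4 & 1
\end{array}
\]
repeated indefinitely. The structural point is that in an alternating arrangement the two neighbors of any vertex in its own column share its altitude, while its two horizontal neighbors have the opposite altitude. Hence every $L$-shaped path has one high and one low endpoint, so (iv) holds automatically; every column path has distinct endpoints by (i); and every row path has distinct endpoints because each row of the tile consists of four distinct ranks, giving (iii). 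In fact no length-$2$ path has equal endpoints, so this tile is even a proper coloring of $(C_3\squareop\mathbb{Z})^2$. When $4\mid n$ the tile wraps around $C_n$ directly and produces a $2$-ranking with six ranks.

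Next I would handle all even $n$. Maintaining the high/low alternation around $C_n$ is possible precisely because $n$ is even; the high columns then form their own cycle of length $n/2$ (each consecutive pair flanking a single low column), and the sole surviving constraint between them comes from the row path centered at the low column: consecutive high columns must differ in every coordinate. Since any two \emph{distinct} cyclic shifts of $(4,5,6)$ differ in all three coordinates, it suffices to choose a proper $3$-coloring of the cycle $C_{n/2}$ by the three shifts, and independently a proper $3$-coloring of the low cycle by the shifts of $(1,2,3)$. Such colorings exist for every $n/2\ge 2$, so six ranks suffice for all even $n$, whether $n\equiv 0$ or $n\equiv 2 \pmod 4$.

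The main obstacle is odd $n$, where altitude-alternation around the cycle is impossible and some adjacent pair of columns must share an altitude. A short argument shows that a pure high--high (or low--low) adjacency cannot satisfy (iv): with only three ranks of one altitude, a horizontal neighbor is forced to duplicate one of the in-column neighbors, and the center cannot outrank it. Thus the parity defect must be absorbed by a bounded-length transition block whose columns mix high and low ranks. The crux of the proof is to exhibit one such constant-size gadget and to verify (i)--(iv) along its two interfaces with the alternating bulk; the hypothesis $n\ge 24$ merely guarantees enough clean period-$4$ columns to host the gadget together with the proper colorings of the two altitude cycles, so that the local modifications do not interfere. I expect constructing this gadget and checking the $L$-shaped constraints where it meets the periodic region to be the only delicate part of the argument.
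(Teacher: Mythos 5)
Your even-$n$ analysis is correct and complete: the alternating high/low column structure, the observation that every $L$-shaped path automatically has endpoints of opposite altitude, and the reduction of the remaining row constraints to choosing distinct cyclic shifts on consecutive high columns all check out. But the lemma claims $\trank(C_3\squareop C_n)\le 6$ for \emph{all} $n\ge 24$, and the odd case is the entire point of this lemma (Lemma~\ref{lem:upbd5} already handles even $n$, with only $5$ ranks). For odd $n$ you correctly identify that pure-altitude columns cannot work and that a transition block with mixed columns is needed, but you never construct it: ``I expect constructing this gadget and checking the $L$-shaped constraints\ldots to be the only delicate part of the argument'' is an acknowledgment of the gap, not a proof. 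The gadget is not a routine detail one can wave at --- it must simultaneously satisfy the column, row, and $L$-path constraints on both of its interfaces with the periodic region, and nothing in your writeup shows such an object exists. As it stands, the proposal proves the lemma only for even $n$.

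For comparison, the paper sidesteps the parity obstruction entirely. It exhibits two explicit tiles, a $3\times 4$ array and a $3\times 9$ array over the ranks $\{0,\ldots,5\}$, which agree on their first two columns and on their last two columns, so that copies of them can be concatenated in any order around the cycle. The hypothesis $n\ge 24$ enters only through the arithmetic fact that $n=4(q-2r)+9r$ with both coefficients nonnegative, i.e., every $n\ge 24$ is a nonnegative integer combination of $4$ and $9$; odd $n$ is handled simply by using an odd number of $9$-tiles. The $3\times 9$ tile is exactly the ``odd-length absorber'' your gadget was meant to be --- and, as you predicted, its columns do mix high and low ranks --- but the paper verifies it concretely, via the clean criterion that the vertices of rank $0$ form an independent set in the graph and each positive rank class is independent in the square of the graph. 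If you want to salvage your approach, you would need to produce and verify a comparable explicit block; at that point the two proofs require essentially the same amount of finite checking, with yours carrying the extra bookkeeping of the interface conditions against the alternating bulk.
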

\begin{proof}
Let $n=4q+r$ for integers $q$ and $r$ with $r\in\{0,1,2,3\}$.  Since $q\ge 6$, we have that $n=4(q-2r)+9r$, and so $n$ is a nonnegative integer combination of $4$ and $9$.  We give $2$-rankings of $C_3\squareop C_9$ and $C_3\squareop C_4$ that can be appended together to give a $2$-ranking of $C_3\squareop C_n$.

\begin{center}
\hfill
$
\begin{array}{ccccccccc}
2 & 4 & 0 & 3 & 1 & 0 & 4 & 0 & 5\\
0 & 5 & 1 & 0 & 5 & 2 & 0 & 1 & 3\\
1 & 3 & 2 & 4 & 0 & 3 & 5 & 2 & 4\\
\end{array}
$
\hfill
$
\begin{array}{cccc}
2 & 4 & 0 & 5\\
0 & 5 & 1 & 3\\
1 & 3 & 2 & 4\\
\end{array}
$
\hfill~
\end{center}

To see that these are $2$-rankings, observe that the vertices assigned rank $0$ form an independent set, and for each positive rank $t$, the vertices assigned rank $t$ are independent in $(C_3\squareop C_n)^2$.  Because both $2$-rankings agree on the first two columns and the last two columns, appending the arrays gives a $2$-ranking.
\end{proof}

Our upper bound improves for even $n$.

\begin{lemma}\label{lem:upbd5}
If $n$ is even and $n\ge 4$, then $\trank(C_3 \squareop C_n)\le 5$.
\end{lemma}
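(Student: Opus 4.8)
The plan is to build an explicit periodic $5$-ranking and to assemble it from small blocks using the appending trick of Lemma~\ref{lem:upbd6}. Represent a ranking as a $(3\times n)$-array $A$, with rows indexed by the triangle and columns by the cycle. The constraints are local: within each column the three entries are distinct, within each row consecutive entries are distinct, and every pair of equal ranks at distance $2$ must have all of its common neighbors strictly higher. A useful consequence is a \emph{rainbow principle}: for any rank-$0$ vertex $z$, the two endpoints of a length-$2$ path through $z$ cannot share a rank, since that rank would have to lie below $0$; hence the neighbors of $z$ receive distinct ranks, and as every vertex of $C_3\squareop C_n$ has degree $4$, they realize all of $\{1,2,3,4\}$. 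This principle will both guide the construction and organize the verification.

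First I would reduce to two widths. Since $n$ is even and $n\ge 4$, write $n=4a+6b$ with $a,b\ge 0$ (every even integer at least $4$ admits such a representation). Following Lemma~\ref{lem:upbd6}, it suffices to produce a $5$-ranking of $C_3\squareop C_4$ and a $5$-ranking of $C_3\squareop C_6$ that agree on their first two columns and on their last two columns; appending copies of these blocks then yields a $5$-ranking of $C_3\squareop C_n$. The reason is that every path of length at most $2$ meeting a seam lies in a window of at most four consecutive columns, namely the last two columns of one block followed by the first two columns of the next; because the blocks share these boundary pairs, this window is a cyclic window of each block and hence is already well-ranked.

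For the width-$4$ block I would use the array
\[
\begin{array}{cccc}
0 & 4 & 0 & 2\\
3 & 0 & 1 & 0\\
1 & 2 & 3 & 4
\end{array}
\]
which one checks is a cyclic $5$-ranking of $C_3\squareop C_4$: each positive rank occurs exactly twice, at distance $3$, so the positive classes are independent in the square, rank $0$ is an independent set, and the rainbow principle holds at all four rank-$0$ vertices. Tiling copies of this single block already handles every $n\equiv 0\pmod 4$. For $n\equiv 2\pmod 4$ I would insert one width-$6$ block, obtained from the width-$4$ block by inserting two columns between a chosen boundary pair while keeping the first two and last two columns fixed, chosen so that the resulting $(3\times 6)$-array is a cyclic $5$-ranking of $C_3\squareop C_6$.

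The hard part is producing the width-$6$ block compatible with the width-$4$ block. The rainbow principle forces the ranks around each rank-$0$ vertex, and these forced values propagate into the shared boundary columns; the danger is that two occurrences of a single positive rank get pushed to distance $2$ with a low common neighbor (for instance a rank-$0$ vertex, or a rank smaller than the repeated value), which would violate well-rankedness at a seam. I would resolve this by choosing where to insert the two new columns and which entries they carry so that every repeated positive rank at distance $2$ is separated only by ranks exceeding it; concretely one verifies the finitely many length-$2$ paths inside the block and across the single seam created by the width-$6$ piece. A short search over the two inserted columns, equivalently over the assignments consistent with the six rainbow constraints together with the column- and row-distinctness conditions, produces such a block and completes the proof.
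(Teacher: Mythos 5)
Your rainbow principle, the reduction to widths $4$ and $6$, and your width-$4$ block are all sound: the array you give is indeed a cyclic $5$-ranking of $C_3\squareop C_4$ (rank $0$ is independent and each positive rank appears twice at distance $3$), and tiling it correctly settles every $n\equiv 0\pmod 4$. The fatal gap is the width-$6$ block for $n\equiv 2\pmod 4$: you defer it to ``a short search,'' but that search comes up empty, because no such block exists. Keeping your first two and last two columns fixed, the width-$6$ block would have to be
\[
\begin{array}{cccccc}
0 & 4 & a & d & 0 & 2\\
3 & 0 & b & e & 1 & 0\\
1 & 2 & c & f & 3 & 4
\end{array}
\]
viewed cyclically. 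Now apply your own rainbow principle at the rank-$0$ vertex in row $1$, column $5$: its four neighbors carry ranks $1$ (row $2$, column $5$), $3$ (row $3$, column $5$), $2$ (row $1$, column $6$), and $d$, so $d=4$ is forced. But then rank $4$ occupies row $1$ in both columns $2$ and $4$; these two vertices are at distance $2$ and their unique common neighbor is the row-$1$, column-$3$ vertex of rank $a$, which would need $a>4$ --- impossible with only five ranks. The alternative reading of your construction, inserting the two new columns at the wrap-around seam instead, fails the same way: the rainbow constraints force a rank-$4$ entry at distance $2$ from an existing $4$ across a low common neighbor. So the $n\equiv 2\pmod 4$ case of your proof collapses, and with it the lemma for $n=6,10,14,\ldots$.

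It is worth seeing why the paper does not hit this wall: its proof of this lemma does \emph{not} use the shared-boundary appending trick of Lemma~\ref{lem:upbd6}. Instead it exhibits width-$4$ and width-$6$ arrays (different from yours) designed so that under \emph{arbitrary} concatenation, in any order, rank $0$ stays an independent set and every positive rank class stays independent in $(C_3\squareop C_n)^2$; that global property is sufficient for a $2$-ranking and requires no matching of boundary columns. The obstruction you run into is to the strategy, not merely to your particular array: the same forcing computation applied to the paper's width-$4$ block shows that it, too, admits no shared-boundary width-$6$ extension. To repair your proof you would need either blocks of the paper's type (whose rank classes are compatible under any concatenation) or a width-$6$ strip that is merely seam-compatible with your width-$4$ block on both sides --- and in either case you must exhibit and verify the array explicitly rather than appeal to an unexamined search.
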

\begin{proof}
Let $n=4q + 6r$ for integers $q$ and $r$ with $r\in\{0,1\}$.  We give $2$-rankings of $C_3\squareop C_4$ and $C_3\squareop C_6$ which can be appended to give a $2$-ranking of $C_3\squareop C_n$.

\begin{center}
\hfill
$
\begin{array}{cccc}
0 & 1 & 0 & 2\\
3 & 2 & 4 & 1\\
4 & 0 & 3 & 0\\
\end{array}
$
\hfill
$
\begin{array}{cccccc}
0 & 1 & 3 & 0 & 4 & 2\\
3 & 0 & 4 & 2 & 0 & 1\\
4 & 2 & 0 & 1 & 3 & 0\\
\end{array}
$
\hfill
~
\end{center}

Regardless of how these arrays are appended, vertices assigned rank $0$ form an independent set, and for each positive rank $t$, the vertices assigned rank $t$ form an independent set in $(C_3\squareop C_n)^2$.
\end{proof}

Since $C_3\squareop C_n$ has degeneracy $4$, it follows that $\trank(C_3\squareop C_n) \ge 5$ always.  When $n$ is odd, our lower bound improves.

\begin{lemma}\label{lem:lobd6}
If $n$ is odd, then $\trank(C_3 \squareop C_n)>5$.
\end{lemma}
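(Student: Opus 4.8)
The plan is to argue by contradiction. Suppose $C_3\squareop C_n$ admits a $2$-ranking using only the ranks $\{0,1,2,3,4\}$, and represent it as a $(3\times n)$-array $A$. I will classify each of the $n$ columns as \emph{high}, meaning it contains a vertex of rank $4$, or \emph{$4$-free}, meaning all three of its ranks lie in $\{0,1,2,3\}$. The goal is to show that no two adjacent columns are both high and no two adjacent columns are both $4$-free. The high/$4$-free labels then form a proper $2$-coloring of the column cycle $C_n$, which is impossible when $n$ is odd; this contradiction gives $\trank(C_3\squareop C_n)>5$. Throughout I will use two features of $A$: each row and each column has distinct entries (length-$1$ paths), and whenever two equal-ranked vertices are joined by a path of length $2$, the interior vertex has strictly larger rank (length-$2$ paths).

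The first step is easy: high columns are pairwise nonadjacent. If columns $j$ and $j+1$ both contain rank $4$, these two maximum-rank vertices lie at distance $1$ (same row) or distance $2$ (different rows). In the first case they are adjacent with equal rank; in the second, each of their common neighbors would need rank exceeding $4$. Either way $A$ fails to be a $2$-ranking.

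The main work, and what I expect to be the crux, is showing that two adjacent columns cannot both be $4$-free. Consider the six vertices of such a pair, which form a triangular prism in which every two distinct vertices are at distance $1$ or $2$. Since all ranks here lie in $\{0,1,2,3\}$, two rank-$3$ vertices at distance $2$ would require a common neighbor of rank $4$, so the prism has at most one rank-$3$ vertex; an analogous argument shows it has at most one rank-$2$ vertex, since two rank-$2$ vertices at distance $2$ would need two distinct common neighbors of rank $3$. Because each column carries three distinct ranks, a short case analysis then forces each column to be an arrangement of $\{0,1,2\}$ or of $\{0,1,3\}$, with exactly one column of each kind: any column using both $2$ and $3$ would confine the other column to $\{0,1\}$, while two columns of the same kind would violate the at-most-one bounds. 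Both columns therefore contain rank $0$ and rank $1$. The two rank-$1$ vertices cannot share a row, so they are at distance $2$, and requiring each of their two common neighbors to exceed $1$ pins down the rows of the rank-$2$ vertex (in the $\{0,1,2\}$-column) and the rank-$3$ vertex (in the $\{0,1,3\}$-column). The one remaining row is then forced to carry rank $0$ in \emph{both} columns, an illegal vertical repetition. This contradiction shows that $4$-free columns are pairwise nonadjacent, and completes the argument.
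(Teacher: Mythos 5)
Your proof is correct, but it takes a genuinely different route from the paper's. The paper works with ranks $\{1,\ldots,5\}$, calls both $4$ and $5$ high, and defines a \emph{high} column as one containing both high ranks and a \emph{low} column as one containing none; these classes are not complementary (a column could contain $4$ but not $5$), so the paper must first run a counting argument---each high rank occupies an independent set of columns in the cycle $C_n$, hence at most $2k$ of the $2k+1$ columns contain a high-rank vertex---to produce at least one low column, and only then propagate the alternation low/high/low around the odd cycle. Your dichotomy (contains the top rank $4$ versus $4$-free) \emph{is} complementary, so you need no counting step and no starting column: showing that each of the two classes is an independent set of columns immediately yields a proper $2$-coloring of the odd cycle $C_n$, a contradiction. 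The crux in both arguments is the same fact, $\trank(C_3\squareop P_2)\ge 5$: the paper asserts it is ``easy to check,'' whereas your prism analysis (at most one rank-$3$ vertex, at most one rank-$2$ vertex, hence column sets $\{0,1,2\}$ and $\{0,1,3\}$, forcing a repeated $0$ on two adjacent vertices) is precisely a self-contained proof of that bound, which is a nice bonus. Two cosmetic slips that do not affect correctness: your blanket claim that ``each row has distinct entries'' is false for $n\ge 4$ (a $2$-ranking only constrains row entries at cyclic distance at most $2$), but every place you invoke it involves adjacent columns, where it is valid; and the final repeated rank $0$ is a repetition along a row (two horizontally adjacent vertices), not a ``vertical'' one.
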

\begin{proof}
Suppose for a contradiction that $C_3 \squareop C_n$ has a $2$-ranking $A$ using ranks in $[5]$.  Ranks $4$ and $5$ are \emph{high}; the other ranks are \emph{low}.  Note that each high rank appears at most once in every pair of adjacent columns of $A$.  It follows that at most $k$ vertices are assigned to each high rank.  A column containing all of the low ranks is \emph{low}, and a column containing all of the high ranks is \emph{high}.  Since $A$ has $2k+1$ columns and at most $2k$ vertices have high rank, it follows that some column of $A$ is \emph{low}.  

It is easy to check that $\trank(C_3\squareop P_2)\ge 5$.  It follows that a column adjacent to a low column must be high.  Since high ranks cannot appear in adjacent columns, a column adjacent to a high column must be low.  Therefore the columns of $A$ alternate high and low cyclically, contradicting that $n$ is odd.
\end{proof}

Collecting the lemmas, we obtain the following theorem.

\begin{theorem}
If $n$ is odd and $n\ge 25$, then $\trank(C_3\squareop C_n) = 6$.  If $n$ is even and $n\ge 6$, then $\trank(C_3\squareop C_n) = 5$.
\end{theorem}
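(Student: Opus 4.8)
The plan is simply to assemble the four preceding results, pairing a matching upper and lower bound within each parity class. First I would dispatch the even case. Since $C_3 \squareop C_n$ is $4$-regular, the whole graph is a subgraph of minimum degree $4$, so its degeneracy equals $4$; by Proposition~\ref{prop:lowerbound} this gives $\trank(C_3\squareop C_n) \ge 5$. For the matching upper bound I would invoke Lemma~\ref{lem:upbd5}, which applies to every even $n \ge 4$ and in particular to every even $n \ge 6$, yielding $\trank(C_3\squareop C_n) \le 5$. Combining the two bounds gives $\trank(C_3 \squareop C_n) = 5$ whenever $n$ is even and $n \ge 6$.

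For the odd case the generic bound $\trank \ge 5$ no longer suffices, so I would instead use Lemma~\ref{lem:lobd6}, which shows $\trank(C_3 \squareop C_n) > 5$ whenever $n$ is odd; since the rank count is an integer, this improves to $\trank(C_3 \squareop C_n) \ge 6$. The matching upper bound comes from Lemma~\ref{lem:upbd6}, valid for all $n \ge 24$. The hypothesis $n \ge 25$ is precisely what makes both statements apply at once: it forces $n$ to be an odd integer that is also at least $24$ (the least such integer being $25$). Combining gives $\trank(C_3\squareop C_n) = 6$ for odd $n \ge 25$.

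There is no genuine obstacle here, as the theorem is a bookkeeping synthesis and all the substantive work lives in the lemmas. The only point that warrants care is the threshold arithmetic: one must verify that each parity hypothesis is compatible with the size constraint demanded by the relevant upper-bound lemma, namely $n \ge 4$ for the even construction (assembled from $C_3 \squareop C_4$ and $C_3 \squareop C_6$ blocks) and $n \ge 24$ for the odd construction (assembled from $C_3 \squareop C_4$ and $C_3 \squareop C_9$ blocks). Once these thresholds are reconciled with the parity, both equalities follow immediately.
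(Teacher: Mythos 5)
Your proposal is correct and matches the paper's proof exactly: the paper simply states ``Collecting the lemmas, we obtain the following theorem,'' meaning precisely the synthesis you describe --- the degeneracy lower bound (Proposition~\ref{prop:lowerbound}) with Lemma~\ref{lem:upbd5} for even $n$, and Lemma~\ref{lem:lobd6} with Lemma~\ref{lem:upbd6} for odd $n\ge 25$. Your threshold check (odd $n \ge 25$ implies $n \ge 24$) is the right point of care and is handled correctly.
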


It would be interesting to find the $2$-ranking number of $C_m\squareop C_n$ for general $m$ and $n$.  

\section*{Acknowledgements}
This research was supported in part by NSA grant H98230-14-1-0325.

\bigskip

\bibliographystyle{plain}
\bibliography{citations.bib}

\end{document}